\newlength{\lyxlistindent}      
\setlist[enumerate]{leftmargin=*,label=(\roman*),align=left}
\newcommand{\xyR}[1]{ \makeatletter
\xydef@\xymatrixrowsep@{#1} \makeatother} 
\newcommand{\xyC}[1]{ \makeatletter
\xydef@\xymatrixcolsep@{#1} \makeatother} 
\newcommand{\ra}{\longrightarrow}
\newcommand{\field}[1]{\mathbb{#1}}
\newcommand{\R}{\field{R}} 
\newcommand{\N}{\field{N}} 
\newcommand{\Z}{\ensuremath{\mathbb{Z}}} 
\newcommand{\eps}{\varepsilon} 
\renewcommand{\phi}{\varphi}
\newcommand{\diff}[1]{\,\hbox{\rm d}#1} 
\newcommand{\Coo}{\mbox{\ensuremath{\mathcal{C}}}^{\infty}} 
\newcommand{\Rtil}{\widetilde \R} 
\newcommand{\otilc}{\widetilde \Omega_c} 
\newcommand{\gs}{\mathcal{G}^s} 
\newcommand{\ns}{\mathcal{N}^s} 
\newcommand{\sint}[1]{\langle#1\rangle} 
\newcommand{\Eball}{B^{{\scriptscriptstyle \text{\rm E}}}} 
\newcommand{\RC}[1]{{}^{\scriptscriptstyle #1}\Rtil}
\newcommand{\csp}[1]{{}^{\scriptscriptstyle \rho}\widetilde{#1_{\text{\rm c}}}}
\newcommand{\supp}{\mbox{supp}}
\newcommand{\cinfty}{{\mathcal C}^\infty}
\newcommand{\comp}{\Subset}
\newcommand{\esm}{{\mathcal E}_M}
\newcommand{\Om}{\Omega}
\newcommand{\sse}{\subseteq}
\newcommand{\rti}{\RC{\rho}}
\newcommand{\gsf}{{}^{\scriptscriptstyle \rho}{\mathcal{GC}}^\infty}
\newcommand{\gsfo}{{\mathcal{GC}}^\infty}
\newcommand{\no}[1]{| #1|}
\begin{document}

\title*{Inverse Function Theorems for Generalized Smooth Functions}

\author{Paolo Giordano and Michael Kunzinger}

\institute{Paolo Giordano \at University of Vienna, Oskar-Morgenstern-Platz
1, 1090 Wien \email{paolo.giordano@univie.ac.at} \and Michael Kunzinger
\at University of Vienna, Oskar-Morgenstern-Platz 1, 1090 Wien \email{michael.kunzinger@univie.ac.at}}
\maketitle

\abstract*{Generalized smooth functions are a possible formalization of the
original historical approach followed by Cauchy, Poisson, Kirchhoff,
Helmholtz, Kelvin, Heaviside, and Dirac to deal with generalized functions.
They are set-theoretical functions defined on a natural non-Archimedean
ring, and include Colombeau generalized functions (and hence also
Schwartz distributions) as a particular case. One of their key property
is the closure with respect to composition. We review the theory of
generalized smooth functions and prove both the local and some global
inverse function theorems.}

\section{Introduction\label{sec:intro}}

Since its inception, category theory has underscored the importance
of unrestricted composition of morphisms for many parts of mathematics.
The closure of a given space of ``arrows'' with respect to composition
proved to be a key foundational property. It is therefore clear that
the lack of this feature for Schwartz distributions has considerable
consequences in the study of differential equations \cite{Kan98,ErlGross},
in mathematical physics \cite{Ba-Zo90,Bo-Sh59,Col92,Dir26,Gi-Ku-St15,Gon-Sca56,Gsp09,Mar68,Mar69,Mik66,Raj82},
and in the calculus of variations \cite{KoKuMO08}, to name but a
few.

On the other hand, Schwartz distributions are so deeply rooted in
the linear framework that one can even isomorphically approach them
focusing only on this aspect, opting for a completely formal/syntactic
viewpoint and without requiring any functional analysis, see \cite{Seb54}.
So, Schwartz distributions do not have a notion of pointwise evaluation
in general, and do not form a category, although it is well known
that certain subclasses of distributions have meaningful notions of
pointwise evaluation, see e.g.~\cite{Loj57,Loj58,Rob66,Pee68,CaFe01,EsVi12,VeVi12}.

This is even more surprising if one takes into account the earlier
historical genesis of generalized functions dating back to authors
like Cauchy, Poisson, Kirchhoff, Helmholtz, Kelvin, Heaviside, and
Dirac, see \cite{Kat-Tal12,Lau89,Lau92,vdP-Bre87}. For them, this
``generalization'' is simply accomplished by fixing an infinitesimal
or infinite parameter in an ordinary smooth function, e.g.~an infinitesimal
and invertible standard deviation in a Gaussian probability density.
Therefore, generalized functions are thought of as some kind of smooth
set-theoretical functions defined and valued in a suitable non-Archimedean
ring of scalars. From this intuitive point of view, they clearly have
point values and form a category.

This aspect also bears upon the concept of (a generalized) solution
of a differential equation. In fact, any theory of generalized functions
must have a link with the classical notion of (smooth) solution. However,
this classical notion is deeply grounded on the concept of composition
of functions and, at the same time, it is often too narrow, as is
amply demonstrated e.g.~in the study of PDE in the presence of singularities.
In our opinion, it is at least not surprising that also the notion
of distributional solution did not lead to a satisfying theory of
nonlinear PDE (not even of singular ODE). We have hence a wild garden
of flourishing equation-dependent techniques and a zoo of counter-examples.
The well-known detaching between these techniques and numerical solutions
of PDE is another side of the same question.

One can say that this situation presents several analogies with the
classical compass-and-straightedge solution of geometrical problems,
or with the solution of polynomial equations by radicals. The distinction
between algebraic and irrational numbers and the advent of Galois
theory were essential steps for mathematics to start focusing on a
different concept of solution, frequently nearer to applied problems.
In the end, these classical problems stimulated more general notions
of geometrical transformation and numerical solution, which nowadays
have superseded their origins. The analogies are even greater when
observing that first steps toward a Galois theory of nonlinear PDE
are arising, see \cite{Ber96,Cas09,Mal02,Mal10}.

\medskip{}

Generalized smooth functions (GSF) are a possible formalization of
the original historical approach of the aforementioned classical authors.
We extend the field of real numbers into a natural non Archimedean
ring $\RC{\rho}$ and we consider the simplest notion of smooth function
on the extended ring of scalars $\rti$. To define a GSF $f:X\ra Y$,
$X\subseteq\rti^{n}$, $Y\subseteq\rti^{d}$, we simply require the
minimal logical conditions so that a net of ordinary smooth functions
$f_{\eps}\in\Coo(\Omega_{\eps},\R^{d})$, $\Omega_{\eps}\subseteq\R^{n}$,
defines a set-theoretical map $X\ra Y$ which is infinitely differentiable;
see below for the details. This freedom in the choice of domains and
codomains is a key property to prove that GSF are closed with respect
to composition. As a result, GSF share so many properties with ordinary
smooth functions that frequently we only have to formally generalize
classical proofs to the new context. This allows an easier approach
to this new theory of generalized functions.

It is important to note that the new framework is richer than the
classical one because of the possibility to express non-Archimedean
properties. So, e.g., two different infinitesimal standard deviations
in a Gaussian result in infinitely close Dirac-delta-like functionals
but, generally speaking, these two GSF could have different infinite
values at infinitesimal points $h\in\rti$. For this reason, Schwartz
distributions are embedded as GSF, but this embedding is not intrinsic
and it has to be chosen depending on the physical problem or on the
particular differential equation we aim to solve.

In the present work, we establish several inverse function theorems
for GSF. We prove both the classical local and also some global versions
of this theorem. It is remarkable to note that the local version is
formally very similar to the classical one, but with the sharp topology
instead of the standard Euclidean one. We also show the relations
between our results and the inverse function theorem for Colombeau
functions established by using the discontinuous calculus of \cite{AFJ05,AFJO12}.

The paper is self-contained in the sense that it contains all the
statements of results required for the proofs of the new inverse function
theorems. If proofs of preliminaries are omitted, we give references
to where they can be found.

\subsection{Basic notions}

\subsubsection*{The ring of generalized scalars}

In this work, $I$ denotes the interval $(0,1]\subseteq\R$ and we
will always use the variable $\eps$ for elements of $I$; we also
denote $\eps$-dependent nets $x\in\R^{I}$ simply by $(x_{\eps})$.
By $\N$ we denote the set of natural numbers, including zero.

We start by defining the non-Archimedean ring of scalars that extends
the real field $\R$. For all the proofs of results in this section,
see \cite{GKV,Gi-Ku-St15}.
\begin{definition}
\label{def:RCGN}Let $\rho=(\rho_{\eps})\in\R^{I}$ be a net such
that $\lim_{\eps\to0^{+}}\rho_{\eps}=0^{+}$, then 
\begin{enumerate}
\item $\mathcal{I}(\rho):=\left\{ (\rho_{\eps}^{-a})\mid a\in\R_{>0}\right\} $
is called the \emph{asymptotic gauge} generated by $\rho$. 
\item If $\mathcal{P}(\eps)$ is a property of $\eps\in I$, we use the
notation $\forall^{0}\eps:\,\mathcal{P}(\eps)$ to denote $\exists\eps_{0}\in I\,\forall\eps\in(0,\eps_{0}]:\,\mathcal{P}(\eps)$.
We can read $\forall^{0}\eps$ as \emph{for $\eps$ small}. 
\item We say that a net $(x_{\eps})\in\R^{I}$ \emph{is $\rho$-moderate},
and we write $(x_{\eps})\in\R_{\rho}$ if $\exists(J_{\eps})\in\mathcal{I}(\rho):\ x_{\eps}=O(J_{\eps})$
as $\eps\to0^{+}$. 
\item Let $(x_{\eps})$, $(y_{\eps})\in\R^{I}$, then we say that $(x_{\eps})\sim_{\rho}(y_{\eps})$
if $\forall(J_{\eps})\in\mathcal{I}(\rho):\ x_{\eps}=y_{\eps}+O(J_{\eps}^{-1})$
as $\eps\to0^{+}$. This is an equivalence relation on the ring $\R_{{\scriptscriptstyle \rho}}$
of moderate nets with respect to pointwise operations, and we can
hence define 
\[
\RC{\rho}:=\R_{{\scriptscriptstyle \rho}}/\sim_{\rho},
\]
which we call \emph{Robinson-Colombeau ring of generalized numbers},
\cite{Rob73,Col92}. We denote the equivalence class $x\in\rti$ simply
by $x=:[x_{\eps}]:=[(x_{\eps})]_{\sim}\in\rti$. 
\end{enumerate}
\end{definition}
In the following, $\rho$ will always denote a net as in Def.~\ref{def:RCGN},
and we will use the simpler notation $\Rtil$ for the case $\rho_{\eps}=\eps$.
The infinitesimal $\rho$ can be chosen depending on the class of
differential equations we need to solve for the generalized functions
we are going to introduce, see \cite{GiLu15}. For motivations concerning
the naturality of $\rti$, see \cite{Gi-Ku-St15}. We also use the
notation $\diff{\rho}:=[\rho_{\eps}]\in\rti$ and $\diff{\eps}:=[\eps]\in\RC{(\eps)}$.

We can also define an order relation on $\RC{\rho}$ by saying $[x_{\eps}]\le[y_{\eps}]$
if there exists $(z_{\eps})\in\R^{I}$ such that $(z_{\eps})\sim_{\rho}0$
(we then say that $(z_{\eps})$ is \emph{$\rho$-negligible}) and
$x_{\eps}\le y_{\eps}+z_{\eps}$ for $\eps$ small. Equivalently,
we have that $x\le y$ if and only if there exist representatives
$(x_{\eps})$, $(y_{\eps})$ of $x$, $y$ such that $x_{\eps}\le y_{\eps}$
for all $\eps$. Clearly, $\RC{\rho}$ is a partially ordered ring.
The usual real numbers $r\in\R$ are embedded in $\RC{\rho}$ considering
constant nets $[r]\in\RC{\rho}$.

Even if the order $\le$ is not total, we still have the possibility
to define the infimum $[x_{\eps}]\wedge[y_{\eps}]:=[\min(x_{\eps},y_{\eps})]$,
and analogously the supremum function $[x_{\eps}]\vee[y_{\eps}]:=\left[\max(x_{\eps},y_{\eps})\right]$
and the absolute value $|[x_{\eps}]|:=[|x_{\eps}|]\in\RC{\rho}$.
Our notations for intervals are: $[a,b]:=\{x\in\RC{\rho}\mid a\le x\le b\}$,
$[a,b]_{\R}:=[a,b]\cap\R$, and analogously for segments $[x,y]:=\left\{ x+r\cdot(y-x)\mid r\in[0,1]\right\} \subseteq\RC{\rho}^{n}$
and $[x,y]_{\R^{n}}=[x,y]\cap\R^{n}$. Finally, we write $x\approx y$
to denote that $|x-y|$ is an infinitesimal number, i.e.~$|x-y|\le r$
for all $r\in\R_{>0}$. This is equivalent to $\lim_{\eps\to0^{+}}|x_{\eps}-y_{\eps}|=0$
for all representatives $(x_{\eps})$, $(y_{\eps})$ of $x$, $y$.

\subsubsection*{Topologies on $\RC{\rho}^{n}$}

On the $\RC{\rho}$-module $\RC{\rho}^{n}$, we can consider the natural
extension of the Euclidean norm, i.e.~$|[x_{\eps}]|:=[|x_{\eps}|]\in\RC{\rho}$,
where $[x_{\eps}]\in\RC{\rho}^{n}$. Even if this generalized norm
takes values in $\RC{\rho}$, it shares several properties with usual
norms, like the triangular inequality or the property $|y\cdot x|=|y|\cdot|x|$.
It is therefore natural to consider on $\RC{\rho}^{n}$ topologies
generated by balls defined by this generalized norm and suitable notions
of being ``strictly less than a given radius'': 
\begin{definition}
\label{def:setOfRadii}Let $c\in\RC{\rho}^{n}$ and $x$, $y\in\RC{\rho}$,
then: 
\begin{enumerate}
\item We write $x<y$ if $\exists r\in\rti_{\ge0}:\ r\text{ is invertible, and }r\le y-x$ 
\item We write $x<_{\R}y$ if $\exists r\in\R_{>0}:\ r\le y-x$. 
\item $B_{r}(c):=\left\{ x\in\RC{\rho}^{n}\mid\left|x-c\right|<r\right\} $
for each $r\in\rti_{>0}$.
\item $B_{r}^{{\scriptscriptstyle \text{F}}}(c):=\left\{ x\in\RC{\rho}^{n}\mid\left|x-c\right|<_{\R}r\right\} $
for each $r\in\R_{>0}$. 
\item $\Eball_{r}(c):=\{x\in\R^{n}\mid|x-c|<r\}$, for each $r\in\R_{>0}$,
denotes an ordinary Euclidean ball in $\R^{n}$. 
\end{enumerate}
\end{definition}
\noindent The relations $<$, $<_{\R}$ have better topological properties
as compared to the usual strict order relation $a\le b$ and $a\ne b$
(that we will \emph{never} use) because both the sets of balls $\left\{ B_{r}(c)\mid r\in\rti_{>0},\ c\in\RC{\rho}^{n}\right\} $
and $\left\{ B_{r}^{{\scriptscriptstyle \text{F}}}(c)\mid r\in\R_{>0},\ c\in\RC{\rho}^{n}\right\} $
are bases for two topologies on $\RC{\rho}^{n}$. The former is called
\emph{sharp topology}, whereas the latter is called \emph{Fermat topology}.
We will call \emph{sharply open set} any open set in the sharp topology,
and \emph{large open set} any open set in the Fermat topology; clearly,
the latter is coarser than the former. The existence of infinitesimal
neighborhoods implies that the sharp topology induces the discrete
topology on $\R$. This is a necessary result when one has to deal
with continuous generalized functions which have infinite derivatives.
In fact, if $f'(x_{0})$ is infinite, only for $x\approx x_{0}$ we
can have $f(x)\approx f(x_{0})$.

The following result is useful to deal with positive and invertible
generalized numbers (cf.~\cite{GKOS,May08}). 
\begin{lemma}
\label{lem:mayer} Let $x\in\RC{\rho}$. Then the following are equivalent: 
\begin{enumerate}
\item \label{enu:positiveInvertible}$x$ is invertible and $x\ge0$, i.e.~$x>0$. 
\item \label{enu:strictlyPositive}For each representative $(x_{\eps})\in\R_{\rho}$
of $x$ we have $\forall^{0}\eps:\ x_{\eps}>0$. 
\item \label{enu:greater-i_epsTom}For each representative $(x_{\eps})\in\R_{\rho}$
of $x$ we have $\exists m\in\N\,\forall^{0}\eps:\ x_{\eps}>\rho_{\eps}^{m}$ 
\end{enumerate}
\end{lemma}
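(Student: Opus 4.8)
The plan is to prove the equivalence of the three characterizations of positive invertible generalized numbers by establishing a cycle of implications. I would organize the argument as $\eqref{enu:positiveInvertible}\Rightarrow\eqref{enu:strictlyPositive}\Rightarrow\eqref{enu:greater-i_epsTom}\Rightarrow\eqref{enu:positiveInvertible}$, since the passage from the uniform lower bound in \eqref{enu:greater-i_epsTom} back to invertibility is the most direct, and starting from the abstract order relation $x>0$ and extracting representative-level information is the natural first step.

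For $\eqref{enu:positiveInvertible}\Rightarrow\eqref{enu:strictlyPositive}$, the assumption $x>0$ unfolds by Def.~\ref{def:setOfRadii}\eqref{enu:positiveInvertible} into the existence of an invertible $r\in\rti_{\ge0}$ with $r\le x$. I would fix an arbitrary representative $(x_{\eps})$ of $x$ and a representative $(r_{\eps})$ of $r$; since $r$ is invertible, a standard fact about $\rti$ (a number is invertible exactly when some representative is bounded below by a power $\rho_{\eps}^{m}$ for $\eps$ small) gives $\forall^{0}\eps:\ r_{\eps}>\rho_{\eps}^{m}/2>0$ for a suitable $m$, after adjusting the representative. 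From $r\le x$ and the representative-level characterization of $\le$ stated in the text, I can pass, at the cost of adding a $\rho$-negligible net, to representatives satisfying $r_{\eps}\le x_{\eps}$ pointwise for $\eps$ small; combining these yields $x_{\eps}>0$ for $\eps$ small. The one subtlety to handle carefully is that the order $\le$ is \emph{a priori} witnessed by \emph{some} choice of representatives, whereas \eqref{enu:strictlyPositive} quantifies over \emph{every} representative; but two representatives of the same $x$ differ by a $\rho$-negligible net, which is dominated by every positive power of $\rho_{\eps}^{-1}$, hence in particular eventually smaller than the invertible lower bound $r_{\eps}$, so the strict positivity transfers to all representatives.

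The implication $\eqref{enu:strictlyPositive}\Rightarrow\eqref{enu:greater-i_epsTom}$ is where I expect the only real work, and it is essentially a compactness-free asymptotic argument. Given that $x_{\eps}>0$ for $\eps$ small for every representative, I would argue by contradiction: if for \emph{every} $m\in\N$ the inequality $x_{\eps}>\rho_{\eps}^{m}$ failed on a cofinal sequence $\eps_{k}\downarrow 0$, I would use these exceptional $\eps$-values to build a new net $(x'_{\eps})$ that agrees with $(x_{\eps})$ off the chosen sequences but is pushed down to $0$ (or below $\rho_{\eps}^{m}$) along them, chosen so that $(x'_{\eps})\sim_{\rho}(x_{\eps})$, i.e.\ it is still a representative of the \emph{same} $x$ but violates strict positivity, contradicting \eqref{enu:strictlyPositive}. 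The delicate point is to perform a diagonal extraction over the countably many exponents $m$ so that the resulting modification is genuinely $\rho$-negligible (the defect must be $O(\rho_{\eps}^{a})$ for every $a>0$); this is the step that must be set up with care.

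Finally, $\eqref{enu:greater-i_epsTom}\Rightarrow\eqref{enu:positiveInvertible}$ is essentially bookkeeping: from $x_{\eps}>\rho_{\eps}^{m}$ for $\eps$ small I immediately get $x\ge 0$, and I can witness invertibility by the element $r:=[\rho_{\eps}^{m}]\in\rti_{\ge0}$, which is invertible (its inverse $[\rho_{\eps}^{-m}]$ is $\rho$-moderate by Def.~\ref{def:RCGN}) and satisfies $r\le x$, so that $x>0$ by definition. Here I would only need to note that modifying $(x_{\eps})$ on the finitely-small $\eps$ where the bound might fail does not change the class, so the lower bound can be taken to hold for all $\eps$ after passing to a representative, closing the cycle.
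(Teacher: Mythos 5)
The paper does not actually prove this lemma---it is quoted with a reference to \cite{GKOS,May08}, where the standard proof runs exactly along the lines you propose: a cycle of implications in which the key step is the diagonal construction of a $\rho$-negligible modification along a sequence $\eps_{m}\downarrow0$ chosen from the exceptional sets for each exponent $m$. Your sketch is correct, including the two points you flag as delicate (transferring the order relation across representatives by absorbing negligible nets into the invertible lower bound, and checking that the diagonal modification is $O(\rho_{\eps}^{a})$ for every $a>0$ because $\rho_{\eps}<1$ eventually), so there is nothing substantive to add.
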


\subsubsection*{Internal and strongly internal sets}

A natural way to obtain sharply open, closed and bounded sets in $\RC{\rho}^{n}$
is by using a net $(A_{\eps})$ of subsets $A_{\eps}\subseteq\R^{n}$.
We have two ways of extending the membership relation $x_{\eps}\in A_{\eps}$
to generalized points $[x_{\eps}]\in\RC{\rho}$: 
\begin{definition}
\label{def:internalStronglyInternal}Let $(A_{\eps})$ be a net of
subsets of $\R^{n}$, then 
\begin{enumerate}
\item $[A_{\eps}]:=\left\{ [x_{\eps}]\in\RC{\rho}^{n}\mid\forall^{0}\eps:\,x_{\eps}\in A_{\eps}\right\} $
is called the \emph{internal set} generated by the net $(A_{\eps})$.
See \cite{ObVe08} for the introduction and an in-depth study of this
notion. 
\item Let $(x_{\eps})$ be a net of points of $\R^{n}$, then we say that
$x_{\eps}\in_{\eps}A_{\eps}$, and we read it as $(x_{\eps})$ \emph{strongly
belongs to $(A_{\eps})$}, if $\forall^{0}\eps:\ x_{\eps}\in A_{\eps}$
and if $(x'_{\eps})\sim_{\rho}(x_{\eps})$, then also $x'_{\eps}\in A_{\eps}$
for $\eps$ small. Moreover, we set $\sint{A_{\eps}}:=\left\{ [x_{\eps}]\in\RC{\rho}^{n}\mid x_{\eps}\in_{\eps}A_{\eps}\right\} $,
and we call it the \emph{strongly internal set} generated by the net
$(A_{\eps})$. 
\item Finally, we say that the internal set $K=[A_{\eps}]$ is \emph{sharply
bounded} if there exists $r\in\RC{\rho}_{>0}$ such that $K\subseteq B_{r}(0)$.
Analogously, a net $(A_{\eps})$ is \emph{sharply bounded} if there
exists $r\in\rti_{>0}$ such that $[A_{\eps}]\subseteq B_{r}(0)$. 
\end{enumerate}
\end{definition}
\noindent Therefore, $x\in[A_{\eps}]$ if there exists a representative
$(x_{\eps})$ of $x$ such that $x_{\eps}\in A_{\eps}$ for $\eps$
small, whereas this membership is independent from the chosen representative
in the case of strongly internal sets. Note explicitly that an internal
set generated by a constant net $A_{\eps}=A\subseteq\R^{n}$ is simply
denoted by $[A]$.

The following theorem shows that internal and strongly internal sets
have dual topological properties: 
\begin{theorem}
\noindent \label{thm:strongMembershipAndDistanceComplement}For $\eps\in I$,
let $A_{\eps}\subseteq\R^{n}$ and let $x_{\eps}\in\R^{n}$. Then
we have 
\begin{enumerate}
\item \label{enu:internalSetsDistance}$[x_{\eps}]\in[A_{\eps}]$ if and
only if $\forall q\in\R_{>0}\,\forall^{0}\eps:\ d(x_{\eps},A_{\eps})\le\rho_{\eps}^{q}$.
Therefore $[x_{\eps}]\in[A_{\eps}]$ if and only if $[d(x_{\eps},A_{\eps})]=0\in\RC{\rho}$. 
\item \label{enu:stronglyIntSetsDistance}$[x_{\eps}]\in\sint{A_{\eps}}$
if and only if $\exists q\in\R_{>0}\,\forall^{0}\eps:\ d(x_{\eps},A_{\eps}^{c})>\rho_{\eps}^{q}$,
where $A_{\eps}^{c}:=\R^{n}\setminus A_{\eps}$. Therefore, if $(d(x_{\eps},A_{\eps}^{c}))\in\R_{\rho}$,
then $[x_{\eps}]\in\sint{A_{\eps}}$ if and only if $[d(x_{\eps},A_{\eps}^{c})]>0$. 
\item \label{enu:internalAreClosed}$[A_{\eps}]$ is sharply closed and
$\sint{A_{\eps}}$ is sharply open. 
\item \label{enu:internalGeneratedByClosed}$[A_{\eps}]=\left[\text{\emph{cl}}\left(A_{\eps}\right)\right]$,
where $\text{\emph{cl}}\left(S\right)$ is the closure of $S\subseteq\R^{n}$.
On the other hand $\sint{A_{\eps}}=\sint{\text{\emph{int}\ensuremath{\left(A_{\eps}\right)}}}$,
where $\emph{int}\left(S\right)$ is the interior of $S\subseteq\R^{n}$. 
\end{enumerate}
\end{theorem}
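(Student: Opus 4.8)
**The plan is to prove each of the four items by translating the sharp-topology/distance conditions into conditions on representatives, exploiting Lemma~\ref{lem:mayer} to handle invertibility.**

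For item~\ref{enu:internalSetsDistance}, the key observation is that $[x_\eps]\in[A_\eps]$ means there is \emph{some} representative landing in $A_\eps$ for small $\eps$. First I would show the forward direction: if $(x'_\eps)\sim_\rho(x_\eps)$ with $x'_\eps\in A_\eps$, then $d(x_\eps,A_\eps)\le|x_\eps-x'_\eps|$, and since $(x_\eps-x'_\eps)$ is $\rho$-negligible, $|x_\eps-x'_\eps|\le\rho_\eps^q$ for $\eps$ small, for every $q\in\R_{>0}$. Conversely, given the distance estimate, I would for each $\eps$ choose $x'_\eps\in A_\eps$ (or in $\mathrm{cl}(A_\eps)$, then approximate) with $|x_\eps-x'_\eps|\le d(x_\eps,A_\eps)+\rho_\eps^{1/\eps}$ say, so that $(x'_\eps)\sim_\rho(x_\eps)$ and $x'_\eps\in A_\eps$ for small $\eps$; this exhibits a representative in $A_\eps$. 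The final reformulation $[d(x_\eps,A_\eps)]=0$ is just the characterization of infinitesimals via Def.~\ref{def:RCGN}.

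For item~\ref{enu:stronglyIntSetsDistance}, strong membership requires $x'_\eps\in A_\eps$ for \emph{every} $(x'_\eps)\sim_\rho(x_\eps)$. I would argue that this stability is equivalent to a uniform lower bound on the distance to the complement. If $d(x_\eps,A_\eps^c)>\rho_\eps^q$ eventually, then any $\rho$-negligible perturbation (which is $O(\rho_\eps^p)$ for all $p$, in particular smaller than $\rho_\eps^q$ eventually) cannot push $x_\eps$ into $A_\eps^c$. Conversely, if no such $q$ exists, then along a subsequence $d(x_\eps,A_\eps^c)\le\rho_\eps^{k_\eps}$ with $k_\eps\to\infty$; picking near-optimal points $y_\eps\in A_\eps^c$ yields a $\rho$-negligible perturbation $x'_\eps:=y_\eps$ violating strong membership. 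The quantifier alternation here ($\exists q$ versus the $\forall^0$ stability) is \textbf{the main obstacle}, since one must construct the bad perturbation carefully so that it is genuinely $\rho$-negligible; this is where Lemma~\ref{lem:mayer}\eqref{enu:greater-i_epsTom} enters, identifying $[d(x_\eps,A_\eps^c)]>0$ with the existence of such an $m$.

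Items~\ref{enu:internalAreClosed} and~\ref{enu:internalGeneratedByClosed} then follow formally. For closedness of $[A_\eps]$: using item~\ref{enu:internalSetsDistance}, $[A_\eps]$ is the zero set of the sharply continuous map $x\mapsto[d(x_\eps,A_\eps)]$, hence sharply closed; dually, $\sint{A_\eps}$ is characterized by the open condition $[d(x_\eps,A_\eps^c)]>0$ from item~\ref{enu:stronglyIntSetsDistance}, using that $<$ defines a sharply open condition via Def.~\ref{def:setOfRadii}. For item~\ref{enu:internalGeneratedByClosed}, I would note that $d(x_\eps,A_\eps)=d(x_\eps,\mathrm{cl}(A_\eps))$ and $d(x_\eps,A_\eps^c)=d(x_\eps,\mathrm{int}(A_\eps)^c)$ as pointwise identities in $\R^n$, so the distance characterizations immediately give $[A_\eps]=[\mathrm{cl}(A_\eps)]$ and $\sint{A_\eps}=\sint{\mathrm{int}(A_\eps)}$. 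These reductions require no new ideas beyond the first two items.
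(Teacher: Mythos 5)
The paper does not prove this theorem itself; it is quoted as a preliminary with the proofs deferred to \cite{GKV,Gi-Ku-St15}, so there is no in-paper argument to compare against. Your reconstruction is correct and follows the standard route one would expect from those references: both directions of \ref{enu:internalSetsDistance} and \ref{enu:stronglyIntSetsDistance} via near-optimal points and the interleaving of a bad subsequence into a negligible perturbation (the genuinely delicate step, which you handle correctly), then \ref{enu:internalAreClosed} and \ref{enu:internalGeneratedByClosed} as formal consequences of the distance characterizations together with $d(x,A)=d(x,\mathrm{cl}(A))$ and $\mathrm{int}(A)^{c}=\mathrm{cl}(A^{c})$. The only point deserving an explicit word is the degenerate case in \ref{enu:internalAreClosed}: the map $x\mapsto[d(x_{\eps},A_{\eps})]$ is only $\RC{\rho}$-valued (moderate) when $[A_{\eps}]\ne\emptyset$, so that case should be split off (it is trivially closed when empty); with that caveat your argument goes through.
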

We will also use the following:
\begin{lemma}
\noindent \label{Lem:fromInclusionTo-epsSmallIncl}Let $(\Omega_{\eps})$
be a net of subsets in $\R^{n}$ for all $\eps$, and $(B_{\eps})$
a sharply bounded net such that $[B_{\eps}]\subseteq\sint{\Omega_{\eps}}$,
then 
\[
\forall^{0}\eps:\ B_{\eps}\subseteq\Omega_{\eps}.
\]
\end{lemma}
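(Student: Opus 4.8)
The plan is to argue by contradiction, converting a failure of the conclusion into a single ``bad'' generalized point that lies in the internal set $[B_{\eps}]$ but cannot lie in the strongly internal set $\sint{\Omega_{\eps}}$, contradicting the hypothesis $[B_{\eps}]\subseteq\sint{\Omega_{\eps}}$. The two halves of Theorem~\ref{thm:strongMembershipAndDistanceComplement} are exactly the tools for this: part~\ref{enu:internalSetsDistance} lets us certify membership in the internal set via distance to $B_{\eps}$, and part~\ref{enu:stronglyIntSetsDistance} turns membership in the strongly internal set into a uniform lower bound on the distance to $\Omega_{\eps}^{c}$.

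First I would negate the conclusion: assume it is \emph{not} the case that $\forall^{0}\eps:\ B_{\eps}\subseteq\Omega_{\eps}$. Then the set $S:=\{\eps\in I\mid B_{\eps}\not\subseteq\Omega_{\eps}\}$ accumulates at $0$ (it is cofinal as $\eps\to0^{+}$). For each $\eps\in S$ I select a witness $b_{\eps}\in B_{\eps}\setminus\Omega_{\eps}$, so that $b_{\eps}\in\Omega_{\eps}^{c}$ and hence $d(b_{\eps},\Omega_{\eps}^{c})=0$; for $\eps\notin S$ I simply pick $b_{\eps}\in B_{\eps}$. This produces a net $(b_{\eps})$ with $b_{\eps}\in B_{\eps}$ for all (small) $\eps$.

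Next I would assemble $(b_{\eps})$ into a genuine generalized point $b:=[b_{\eps}]\in\rti^{n}$ and verify $b\in[B_{\eps}]$. Since $b_{\eps}\in B_{\eps}$ throughout, we have $d(b_{\eps},B_{\eps})=0\le\rho_{\eps}^{q}$ for every $q\in\R_{>0}$, so Theorem~\ref{thm:strongMembershipAndDistanceComplement}, part~\ref{enu:internalSetsDistance}, gives $b\in[B_{\eps}]$ — \emph{provided} $(b_{\eps})$ is $\rho$-moderate, which is where sharp boundedness of $(B_{\eps})$ enters: from $[B_{\eps}]\subseteq B_{r}(0)$ with $r\in\rti_{>0}$ I extract a moderate bound on the admissible selections and conclude moderateness of $(b_{\eps})$. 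With $b\in[B_{\eps}]\subseteq\sint{\Omega_{\eps}}$ in hand, Theorem~\ref{thm:strongMembershipAndDistanceComplement}, part~\ref{enu:stronglyIntSetsDistance}, yields some $q\in\R_{>0}$ with $d(b_{\eps},\Omega_{\eps}^{c})>\rho_{\eps}^{q}$ for $\eps$ small. But for every $\eps\in S$ we arranged $d(b_{\eps},\Omega_{\eps}^{c})=0$, and $S$ accumulates at $0$, so this inequality cannot hold for all small $\eps$ — the desired contradiction.

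I expect the \textbf{main obstacle} to be precisely the moderateness step, i.e.\ guaranteeing that the selected witnesses assemble into an honest element of $\rti^{n}$ that still belongs to $[B_{\eps}]$; an arbitrary pointwise selection from a merely pointwise-bounded net need not be $\rho$-moderate, so it is exactly the \emph{sharp} boundedness hypothesis on $(B_{\eps})$ that must be leveraged here (and which forbids the witnesses from escaping to infinite size along a subsequence). Once the point is correctly formed, the remainder is a routine application of the distance characterizations of Theorem~\ref{thm:strongMembershipAndDistanceComplement} along the cofinal index set $S$.
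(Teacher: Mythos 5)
Your argument is the standard one for this lemma and matches the technique the paper itself uses for the analogous Lemma \ref{lem:fromPointwiseToCompact}: negate the conclusion, pick witnesses $b_{\eps_k}\in B_{\eps_k}\setminus\Omega_{\eps_k}$ along a sequence $\eps_k\downarrow 0$, complete them to a selection from $B_\eps$, and let the resulting point of $[B_\eps]$ contradict Thm.~\ref{thm:strongMembershipAndDistanceComplement}.\ref{enu:stronglyIntSetsDistance} (in fact $b_{\eps_k}\notin\Omega_{\eps_k}$ already contradicts the definition of strong membership directly, so part \ref{enu:stronglyIntSetsDistance} is not even needed). You also correctly identify the only nontrivial point, namely moderateness of the witness net.

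That step, however, does not go through the way you describe it. From the paper's literal definition of a sharply bounded \emph{net} (namely $[B_\eps]\subseteq B_r(0)$ for some $r\in\rti_{>0}$) one cannot ``extract a moderate bound on the admissible selections'': the condition constrains only the \emph{moderate} selections, since non-moderate selections simply fail to define points of $[B_\eps]$. Concretely, for $\rho_\eps=\eps$ take $B_\eps=\{0,\,e^{e^{1/\eps}}\}$; every moderate selection is eventually $0$, so $[B_\eps]=\{0\}\subseteq B_1(0)$, yet the selection $b_\eps=e^{e^{1/\eps}}$ is not moderate, and with $\Omega_\eps=(-1,1)$ this would even be a counterexample to the lemma as literally stated. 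What is actually needed (and is the definition used in the cited sources) is the stronger reading that $\forall^0\eps:\ B_\eps\subseteq\Eball_{r_\eps}(0)$ for some moderate net $(r_\eps)$; under that reading any selection $b_\eps\in B_\eps$ is trivially moderate, your point $b=[b_\eps]$ is well defined, and the rest of your argument closes the proof. (A similar tacit assumption, $B_\eps\ne\emptyset$ for $\eps$ small, is needed to complete the selection off the bad sequence; compare the explicit hypothesis $\emptyset\ne[A_\eps]$ in Lemma \ref{lem:fromPointwiseToCompact}.)
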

Sharply bounded internal sets (which are always sharply closed by
Thm.~\ref{thm:strongMembershipAndDistanceComplement}~\ref{enu:internalAreClosed})
serve as compact sets for our generalized functions. For a deeper
study of this type of sets in the case $\rho=(\eps)$ see \cite{ObVe08,GiKu16};
in the same particular setting, see \cite{GKV} and references therein
for (strongly) internal sets.

\subsubsection*{Generalized smooth functions}

For the ideas presented in this section, see also e.g.~\cite{GKV,Gi-Ku-St15}.

Using the ring $\rti$, it is easy to consider a Gaussian with an
infinitesimal standard deviation. If we denote this probability density
by $f(x,\sigma)$, and if we set $\sigma=[\sigma_{\eps}]\in\RC{\rho}_{>0}$,
where $\sigma\approx0$, we obtain the net of smooth functions $(f(-,\sigma_{\eps}))_{\eps\in I}$.
This is the basic idea we develop in the following 
\begin{definition}
\label{def:netDefMap}Let $X\subseteq\RC{\rho}^{n}$ and $Y\subseteq\RC{\rho}^{d}$
be arbitrary subsets of generalized points. Then we say that 
\[
f:X\longrightarrow Y\text{ is a \emph{generalized smooth function}}
\]
if there exists a net of functions $f_{\eps}\in\cinfty(\Omega_{\eps},\R^{d})$
defining $f$ in the sense that $X\subseteq\langle\Omega_{\eps}\rangle$,
$f([x_{\eps}])=[f_{\eps}(x_{\eps})]\in Y$ and $(\partial^{\alpha}f_{\eps}(x_{\eps}))\in\R_{{\scriptscriptstyle \rho}}^{d}$
for all $x=[x_{\eps}]\in X$ and all $\alpha\in\N^{n}$. The space
of GSF from $X$ to $Y$ is denoted by $\gsf(X,Y)$. 
\end{definition}
Let us note explicitly that this definition states minimal logical
conditions to obtain a set-theoretical map from $X$ into $Y$ and
defined by a net of smooth functions. In particular, the following
Thm.~\ref{thm:propGSF} states that the equality $f([x_{\eps}])=[f_{\eps}(x_{\eps})]$
is meaningful, i.e.~that we have independence from the representatives
for all derivatives $[x_{\eps}]\in X\mapsto[\partial^{\alpha}f_{\eps}(x_{\eps})]\in\RC{\rho}^{d}$,
$\alpha\in\N^{n}$. 
\begin{theorem}
\label{thm:propGSF}Let $X\subseteq\RC{\rho}^{n}$ and $Y\subseteq\RC{\rho}^{d}$
be arbitrary subsets of generalized points. Let $f_{\eps}\in\cinfty(\Omega_{\eps},\R^{d})$
be a net of smooth functions that defines a generalized smooth map
of the type $X\longrightarrow Y$, then 
\begin{enumerate}
\item $\forall\alpha\in\N^{n}\,\forall(x_{\eps}),(x'_{\eps})\in\R_{\rho}^{n}:\ [x_{\eps}]=[x'_{\eps}]\in X\ \Rightarrow\ (\partial^{\alpha}u_{\eps}(x_{\eps}))\sim_{\rho}(\partial^{\alpha}u_{\eps}(x'_{\eps}))$. 
\item \label{enu:modOnEpsDepBall}$\forall[x_{\eps}]\in X\,\forall\alpha\in\N^{n}\,\exists q\in\R_{>0}\,\forall^{0}\eps:\ \sup_{y\in\Eball_{\eps^{q}}(x_{\eps})}\left|\partial^{\alpha}u_{\eps}(y)\right|\le\eps^{-q}$. 
\item \label{enu:locLipSharp}For all $\alpha\in\N^{n}$, the GSF $g:[x_{\eps}]\in X\mapsto[\partial^{\alpha}f_{\eps}(x_{\eps})]\in\Rtil^{d}$
is locally Lipschitz in the sharp topology, i.e.~each $x\in X$ possesses
a sharp neighborhood $U$ such that $|g(x)-g(y)|\le L|x-y|$ for all
$x$, $y\in U$ and some $L\in\RC{\rho}$. 
\item \label{enu:GSF-cont}Each $f\in\gsf(X,Y)$ is continuous with respect
to the sharp topologies induced on $X$, $Y$. 
\item \label{enu:suffCondFermatCont}Assume that the GSF $f$ is locally
Lipschitz in the Fermat topology and that its Lipschitz constants
are always finite: $L\in\R$. Then $f$ is continuous in the Fermat
topology. 
\item \label{enu:globallyDefNet}$f:X\longrightarrow Y$ is a GSF if and
only if there exists a net $v_{\eps}\in\cinfty(\R^{n},\R^{d})$ defining
a generalized smooth map of type $X\longrightarrow Y$ such that $f=[v_{\eps}(-)]|_{X}$. 
\item \label{enu:category}Subsets $S\subseteq\RC{\rho}^{s}$ with the trace
of the sharp topology, and generalized smooth maps as arrows form
a subcategory of the category of topological spaces. We will call
this category $\gsf$, the \emph{category of GSF}. 
\end{enumerate}
\end{theorem}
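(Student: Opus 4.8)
The plan is to prove the seven assertions in the order dictated by their logical dependence: the technical core is the ball-wise moderateness estimate~(\ref{enu:modOnEpsDepBall}), from which representation independence~(1) and the local Lipschitz bound~(\ref{enu:locLipSharp}) follow by the mean value theorem; sharp continuity~(\ref{enu:GSF-cont}) and Fermat continuity~(\ref{enu:suffCondFermatCont}) are then soft consequences, whereas the global net~(\ref{enu:globallyDefNet}) and the category property~(\ref{enu:category}) rest on a separate cutoff construction. I would establish~(\ref{enu:modOnEpsDepBall}) by contradiction. Since $x=[x_\eps]\in X\subseteq\sint{\Omega_\eps}$, Thm.~\ref{thm:strongMembershipAndDistanceComplement}\ref{enu:stronglyIntSetsDistance} gives $q_0$ with $d(x_\eps,\Omega_\eps^c)>\rho_\eps^{q_0}$ for $\eps$ small, so that $\Eball_{\rho_\eps^{q_0}}(x_\eps)\subseteq\Omega_\eps$ and the relevant derivatives are defined. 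Negating~(\ref{enu:modOnEpsDepBall}) for some $x$ and $\alpha$ produces, for each $q$, points $y_\eps\in\Eball_{\rho_\eps^q}(x_\eps)$ with $|\partial^\alpha f_\eps(y_\eps)|>\rho_\eps^{-q}$ for arbitrarily small $\eps$; diagonalizing over $q=1,2,\dots$ I would assemble a net $(y_\eps)$ with $|y_\eps-x_\eps|\le\rho_\eps^{q}$ on the $q$-th range, hence $(y_\eps)\sim_\rho(x_\eps)$ and $[y_\eps]=x\in X$, while $(\partial^\alpha f_\eps(y_\eps))$ is not $\rho$-moderate. This contradicts the defining property of $f$ evaluated at the representative $(y_\eps)$.

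With~(\ref{enu:modOnEpsDepBall}) in hand, assertion~(1) is a mean value estimate: for $(x_\eps)\sim_\rho(x'_\eps)$ representing $x\in X$, apply~(\ref{enu:modOnEpsDepBall}) to the multi-indices $\alpha+e_j$ to bound $|\partial^{\alpha+e_j}f_\eps|$ by $\rho_\eps^{-q}$ on $\Eball_{\rho_\eps^q}(x_\eps)$; since $|x_\eps-x'_\eps|\le\rho_\eps^m$ for every $m$, the segment lies in that ball for $\eps$ small and $|\partial^\alpha f_\eps(x_\eps)-\partial^\alpha f_\eps(x'_\eps)|\le\rho_\eps^{-q}|x_\eps-x'_\eps|$ is negligible. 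The same estimate on a sharp ball $B_s(x)$ with $s$ an invertible infinitesimal below (roughly) $[\rho_\eps^q]$ yields~(\ref{enu:locLipSharp}) with $L:=[\rho_\eps^{-q}]\in\RC{\rho}$. Taking $\alpha=0$ in~(\ref{enu:locLipSharp}) gives~(\ref{enu:GSF-cont}): for an invertible radius $\sigma\in\rti_{>0}$ one chooses an invertible $\delta:=\tfrac{\sigma}{2}(L+\diff{\rho})^{-1}>0$ (inverting $L+\diff{\rho}>0$ via Lem.~\ref{lem:mayer}) so that $L\delta<\sigma$ and $f(B_\delta(x)\cap X)\subseteq B_\sigma(f(x))$. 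Assertion~(\ref{enu:suffCondFermatCont}) is the identical argument carried out with Fermat balls $B_r^{{\scriptscriptstyle \text{F}}}$: the hypothesis $L\in\R$ is precisely what permits the real choice $\delta:=\sigma/(L+1)\in\R_{>0}$, so continuity persists in the coarser Fermat topology.

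For~(\ref{enu:globallyDefNet}), the implication $\Leftarrow$ is immediate, as $v_\eps$ is itself a defining net and $X\subseteq\sint{\R^n}=\RC{\rho}^n$. For $\Rightarrow$ I would fix a net $\delta_\eps\to0$ decaying faster than every gauge, e.g.\ $\delta_\eps:=\rho_\eps^{1/\eps}$, and a smooth cutoff $\chi_\eps$ equal to $1$ on $\{d(\cdot,\Omega_\eps^c)\ge2\delta_\eps\}$ and supported in $\{d(\cdot,\Omega_\eps^c)\ge\delta_\eps\}\Subset\Omega_\eps$, so that $v_\eps:=\chi_\eps f_\eps$, extended by $0$, lies in $\cinfty(\R^n,\R^d)$. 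Each fixed $x\in X$ satisfies $d(x_\eps,\Omega_\eps^c)>\rho_\eps^{q}>2\delta_\eps$ for $\eps$ small, whence $v_\eps\equiv f_\eps$ on a neighborhood of $x_\eps$ and all derivatives coincide there; thus $v_\eps$ defines the same GSF. Finally~(\ref{enu:category}): identities are GSF, so the content is closure under composition. Given $f\in\gsf(X,Y)$ and $g\in\gsf(Y,Z)$, use~(\ref{enu:globallyDefNet}) to pick a globally defined net $g_\eps\in\cinfty(\R^d,\R^k)$ for $g$, making $g_\eps\circ f_\eps$ defined on all of $\Omega_\eps$; by~(1) the value $[g_\eps(f_\eps(x_\eps))]=g(f(x))$ is representative-independent, and Fa\`a di Bruno's formula expresses $\partial^\alpha(g_\eps\circ f_\eps)(x_\eps)$ as a finite sum of products of derivatives of $g_\eps$ at $f_\eps(x_\eps)$ (a representative of a point of $Y$) and of $f_\eps$ at $x_\eps$, each moderate. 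Hence $g\circ f\in\gsf(X,Z)$, and together with~(\ref{enu:GSF-cont}) this exhibits $\gsf$ as a subcategory of $\Top$.

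I expect the main obstacle to be~(\ref{enu:modOnEpsDepBall}): upgrading moderateness from the single points $x_\eps$ to whole $\eps$-dependent balls is exactly what powers every subsequent mean value argument, and the delicate point is ensuring that the diagonally extracted net $(y_\eps)$ is genuinely $\rho$-equivalent to $(x_\eps)$ (so that it still represents a point of $X$) while violating moderateness. A secondary subtlety lies in the cutoff of~(\ref{enu:globallyDefNet}), where the rate $\delta_\eps$ must beat every gauge $\rho_\eps^q$ at once; this is possible only because each individual $x\in X$ contributes the condition $d(x_\eps,\Omega_\eps^c)>\rho_\eps^{q}$ for a single $q$ depending on $x$, which Lem.~\ref{Lem:fromInclusionTo-epsSmallIncl} and Thm.~\ref{thm:strongMembershipAndDistanceComplement} let me exploit pointwise rather than uniformly.
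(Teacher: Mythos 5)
The paper does not prove Theorem~\ref{thm:propGSF} itself but quotes it as a preliminary from \cite{GKV,Gi-Ku-St15}; your argument is correct and follows essentially the same route as the proofs there: the diagonal/contradiction argument for the ball-wise moderateness~\ref{enu:modOnEpsDepBall} (exploiting that moderateness is required for \emph{every} representative of every point of $X$, exactly as in the paper's own Lemma~\ref{lem:fromPointwiseToCompact}), the mean value theorem for well-definedness and the sharp Lipschitz estimate, a sub-gauge cutoff for~\ref{enu:globallyDefNet}, and Fa\`a di Bruno for closure under composition. The only cosmetic looseness is in the diagonalization, where the exceptional points need only be planted at the discrete parameters $\eps=\eps_k$ (with $y_\eps:=x_\eps$ elsewhere) rather than on whole ranges, but this does not affect correctness.
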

The differential calculus for GSF can be introduced showing existence
and uniqueness of another GSF serving as incremental ratio. For its
statement, if $\mathcal{P}(h)$ is a property of $h\in\rti$, then
we write $\forall^{\text{s}}h:\ \mathcal{P}(h)$ to denote $\exists r\in\rti_{>0}\,\forall h\in B_{r}(0):\ \mathcal{P}(h)$
and $\forall^{{\scriptscriptstyle \text{F}}}h:\ \mathcal{P}(h)$ for
$\exists r\in\R_{>0}\,\forall h\in B_{r}^{{\scriptscriptstyle \text{F}}}(c):\ \mathcal{P}(h)$. 
\begin{theorem}
\noindent \label{thm:FR-forGSF} Let $U\subseteq\RC{\rho}^{n}$ be
a sharply open set, let $v=[v_{\eps}]\in\RC{\rho}^{n}$, and let $f\in\gsf(U,\RC{\rho})$
be a generalized smooth map generated by the net of smooth functions
$f_{\eps}\in\cinfty(\Omega_{\eps},\R)$. Then 
\begin{enumerate}
\item \label{enu:existenceRatio}There exists a sharp neighborhood $T$
of $U\times\{0\}$ and a generalized smooth map $r\in\gsf(T,\RC{\rho})$,
called the \emph{generalized incremental ratio} of $f$ \emph{along}
$v$, such that 
\[
\forall x\in U\,\forall^{\text{s}}h:\ f(x+hv)=f(x)+h\cdot r(x,h).
\]
\item \label{enu:uniquenessRatio}If $\bar{r}\in\gsf(S,\RC{\rho})$ is another
generalized incremental ratio of $f$ along $v$ defined on a sharp
neighborhood $S$ of $U\times\{0\}$, then 
\[
\forall x\in U\,\forall^{\text{s}}h:\ r(x,h)=\bar{r}(x,h).
\]
\item \label{enu:defDer}We have $r(x,0)=\left[\frac{\partial f_{\eps}}{\partial v_{\eps}}(x_{\eps})\right]$
for every $x\in U$ and we can thus define $\frac{\partial f}{\partial v}(x):=r(x,0)$,
so that $\frac{\partial f}{\partial v}\in\gsf(U,\RC{\rho})$. 
\end{enumerate}
\noindent If $U$ is a large open set, then an analogous statement
holds replacing $\forall^{\text{s}}h$ by $\forall^{{\scriptscriptstyle \text{F}}}h$
and sharp neighborhoods by large neighborhoods. 
\end{theorem}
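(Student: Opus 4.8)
The plan is to manufacture the incremental ratio at the level of $\eps$-representatives by the fundamental theorem of calculus and then to verify that the resulting net is a GSF on a sharp neighborhood of $U\times\{0\}$ that is thin enough in the $h$-direction. Using Thm.~\ref{thm:propGSF}\ref{enu:globallyDefNet} I would first replace $(f_\eps)$ by a net defined on all of $\R^n$, and set
\[
 r_\eps(y,k):=\int_0^1\frac{\partial f_\eps}{\partial v_\eps}(y+tkv_\eps)\diff t\in\cinfty(\R^{n+1},\R),
\]
so that differentiation under the integral sign keeps $r_\eps$ smooth and the fundamental theorem of calculus gives the pointwise identity $f_\eps(y+kv_\eps)=f_\eps(y)+k\,r_\eps(y,k)$. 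Exploiting that $U$ is sharply open, for each $x_0\in U$ I would pick $s_{x_0}\in\RC{\rho}_{>0}$ with $B_{s_{x_0}}(x_0)\subseteq U$ and take $T$ to be the sharply open union of the ``tubes'' $\{(x,h)\mid|x-x_0|+|h|\cdot|v|<s_{x_0}\}$; then $U\times\{0\}\subseteq T$, and for every $(x,h)\in T$ the whole generalized segment $[x,x+hv]$ stays inside $U$.

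The technical heart is to show that $r:=[r_\eps(-)]|_{T}$ is a GSF $T\to\RC{\rho}$, i.e. representative independence together with $\rho$-moderateness of every $\partial^\alpha r_\eps(x_\eps,h_\eps)$ for $(x,h)\in T$. Differentiating under the integral expresses $\partial^\alpha r_\eps(x_\eps,h_\eps)$ as an integral over $t\in[0,1]$ of finitely many derivatives $\partial^\gamma f_\eps(x_\eps+th_\eps v_\eps)$ weighted by powers of $t$ and of the moderate vector $v_\eps$, so the estimate reduces to bounding $\sup_{t\in[0,1]}|\partial^\gamma f_\eps(x_\eps+th_\eps v_\eps)|$ by a moderate net. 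Here is the key move: this supremum over the real parameter is attained at some $t^\ast_\eps\in[0,1]$, and assembling these into $t^\ast:=[t^\ast_\eps]\in[0,1]\subseteq\RC{\rho}$ yields a point $x+t^\ast hv\in[x,x+hv]\subseteq U$; since $f$ has $\rho$-moderate derivatives at \emph{every} point of $U$ (Def.~\ref{def:netDefMap}), the value $\partial^\gamma f_\eps(x_\eps+t^\ast_\eps h_\eps v_\eps)$ is moderate, and hence so is the supremum. Comparing the two representatives, which differ by a $\rho$-negligible net, via the sharp local Lipschitz bound of Thm.~\ref{thm:propGSF}\ref{enu:locLipSharp} then gives independence of the representatives.

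Granting $r\in\gsf(T,\RC{\rho})$, passing the pointwise identity to the quotient proves (i): for $x\in U$ and $h$ sharply small one has $x+hv\in U$ and $(x,h)\in T$, so $f(x+hv)=f(x)+h\,r(x,h)$ in $\RC{\rho}$. For (ii), on the sharp neighborhood $T\cap S$ both ratios satisfy $h\,r(x,h)=h\,\bar r(x,h)=f(x+hv)-f(x)$; cancelling the invertible factor gives $r(x,h)=\bar r(x,h)$ whenever $h$ is invertible, and since invertible increments are sharply dense and $r,\bar r$ are sharply continuous (Thm.~\ref{thm:propGSF}\ref{enu:GSF-cont}), equality extends to all small $h$, in particular to $h=0$. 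For (iii), evaluating at $k=0$ gives $r(x,0)=[\int_0^1\frac{\partial f_\eps}{\partial v_\eps}(x_\eps)\diff t]=[\frac{\partial f_\eps}{\partial v_\eps}(x_\eps)]$, and $\frac{\partial f}{\partial v}=r\circ(\mathrm{id},0)$ is a GSF by closure under composition (Thm.~\ref{thm:propGSF}\ref{enu:category}). The large/Fermat statement follows by rerunning the argument with Fermat balls and the Fermat-continuity criterion of Thm.~\ref{thm:propGSF}\ref{enu:suffCondFermatCont}.

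The step I expect to be the main obstacle is the moderateness verification of the previous paragraph: converting the pointwise moderateness of $f$'s derivatives at points of $U$ into a bound that is \emph{uniform along the whole integration segment}, for every derivative order $\alpha$ simultaneously. The supremum-point device is what makes this uniform control possible; it must be run for the finitely many $\gamma$ entering a given $\partial^\alpha r_\eps$ and combined with the local boundedness and local Lipschitz estimates of Thm.~\ref{thm:propGSF} to also secure independence of representatives. A secondary subtlety is the sharp density of invertible increments needed to close the uniqueness argument (ii) at the non-invertible value $h=0$.
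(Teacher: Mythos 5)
The paper itself does not prove this theorem---it is stated as a preliminary with the proof deferred to \cite{GKV,Gi-Ku-St15}---but your proposal correctly reconstructs the argument given there: the integral formula for $r_\eps$, the thickened neighborhood $T$ of $U\times\{0\}$ along $v$, the maximizing-parameter device $t^\ast=[t^\ast_\eps]\in[0,1]$ that reduces moderateness along the whole integration segment to pointwise moderateness of $f$'s derivatives at a single point of $U$, and cancellation of invertible $h$ combined with sharp density of invertible elements and sharp continuity for uniqueness. This is essentially the same proof, with no gaps of substance.
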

Note that this result permits to consider the partial derivative of
$f$ with respect to an arbitrary generalized vector $v\in\RC{\rho}^{n}$
which can be, e.g., infinitesimal or infinite.

Using this result we obtain the usual rules of differential calculus,
including the chain rule. Finally, we note that for each $x\in U$,
the map $Df(x).v:=\frac{\partial f}{\partial v}(x)\in\RC{\rho}^{d}$
is $\RC{\rho}$-linear in $v\in\RC{\rho}^{n}$. The set of all the
$\rti$-linear maps $\rti^{n}\ra\rti^{d}$ will be denoted by $L(\rti^{n},\rti^{d})$.
For $A=[A_{\eps}(-)]\in L(\rti^{n},\rti^{d})$, we set $\no{A}:=[\no{A_{\eps}}]$,
the generalized number defined by the operator norms of the matrices
$A_{\eps}\in L(\R^{n},\R^{d})$.

\subsubsection*{Embedding of Schwartz distributions and Colombeau functions}

We finally recall two results that give a certain flexibility in constructing
embeddings of Schwartz distributions. Note that both the infinitesimal
$\rho$ and the embedding of Schwartz distributions have to be chosen
depending on the problem we aim to solve. A trivial example in this
direction is the ODE $y'=y/\diff{\eps}$, which cannot be solved for
$\rho=(\eps)$, but it has a solution for $\rho=(e^{-1/\eps})$. As
another simple example, if we need the property $H(0)=1/2$, where
$H$ is the Heaviside function, then we have to choose the embedding
of distributions accordingly. This corresponds to the philosophy followed
in \cite{Hair14}. See also \cite{GiLu15} for further details.\\
 If $\phi\in\mathcal{D}(\R^{n})$, $r\in\R_{>0}$ and $x\in\R^{n}$,
we use the notations $r\odot\phi$ for the function $x\in\R^{n}\mapsto\frac{1}{r^{n}}\cdot\phi\left(\frac{x}{r}\right)\in\R$
and $x\oplus\phi$ for the function $y\in\R^{n}\mapsto\phi(y-x)\in\R$.
These notations permit to highlight that $\odot$ is a free action
of the multiplicative group $(\R_{>0},\cdot,1)$ on $\mathcal{D}(\R^{n})$
and $\oplus$ is a free action of the additive group $(\R_{>0},+,0)$
on $\mathcal{D}(\R^{n})$. We also have the distributive property
$r\odot(x\oplus\phi)=rx\oplus r\odot\phi$. 
\begin{lemma}
\label{lem:strictDeltaNet}Let $b\in\R_{{\scriptscriptstyle \rho}}$
be a net such that $\lim_{\eps\to0^{+}}b_{\eps}=+\infty$. Let $d\in(0,1)$.
There exists a net $\left(\psi_{\eps}\right)_{\eps\in I}$ of $\mathcal{D}(\R^{n})$
with the properties: 
\begin{enumerate}
\item \label{enu:suppStrictDeltaNet}$supp(\psi_{\eps})\subseteq B_{1}(0)$
for all $\eps\in I$. 
\item \label{enu:intOneStrictDeltaNet}$\int\psi_{\eps}=1$ for all $\eps\in I$. 
\item \label{enu:moderateStrictDeltaNet}$\forall\alpha\in\N^{n}\,\exists p\in\N:\ \sup_{x\in\R^{n}}\left|\partial^{\alpha}\psi_{\eps}(x)\right|=O(b_{\eps}^{p})$
as $\eps\to0^{+}$. 
\item \label{enu:momentsStrictDeltaNet}$\forall j\in\N\,\forall^{0}\eps:\ 1\le|\alpha|\le j\Rightarrow\int x^{\alpha}\cdot\psi_{\eps}(x)\diff{x}=0$. 
\item \label{enu:smallNegPartStrictDeltaNet}$\forall\eta\in\R_{>0}\,\forall^{0}\eps:\ \int\left|\psi_{\eps}\right|\le1+\eta$. 
\item \label{enu:int1Dim}If $n=1$, then the net $(\psi_{\eps})_{\eps\in I}$
can be chosen so that $\int_{-\infty}^{0}\psi_{\eps}=d$. 
\end{enumerate}
\noindent If $\psi_{\eps}$ satisfies \ref{enu:suppStrictDeltaNet}
\textendash{} \ref{enu:int1Dim} then in particular $\psi_{\eps}^{b}:=b_{\eps}^{-1}\odot\psi_{\eps}$
satisfies \ref{enu:intOneStrictDeltaNet} - \ref{enu:smallNegPartStrictDeltaNet}. 
\end{lemma}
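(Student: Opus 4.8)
The plan is to decouple the construction into a family of single test functions and a diagonal choice of the moment order. First I would prove the auxiliary claim: for every $N\in\N$ there exists $\psi^{(N)}\in\mathcal{D}(\R^n)$ with $\supp\psi^{(N)}\subseteq B_1(0)$, $\int\psi^{(N)}\diff{x}=1$, $\int x^\alpha\psi^{(N)}\diff{x}=0$ for all $1\le|\alpha|\le N$, and $\int|\psi^{(N)}|\diff{x}\le 1+\tfrac1N$ (and, when $n=1$, also $\int_{-\infty}^0\psi^{(N)}=d$). Granting this, I set $\psi_\eps:=\psi^{(N_\eps)}$ for a net $N_\eps\to\infty$ fixed below. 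Then \ref{enu:suppStrictDeltaNet}, \ref{enu:intOneStrictDeltaNet} and \ref{enu:int1Dim} hold for every $\eps$, \ref{enu:momentsStrictDeltaNet} holds because $N_\eps\ge j$ eventually, and \ref{enu:smallNegPartStrictDeltaNet} holds since $\int|\psi_\eps|\le 1+1/N_\eps\to1$; only the moderateness estimate \ref{enu:moderateStrictDeltaNet} will dictate how fast $N_\eps$ may grow. Finally, the closing assertion on $\psi^b_\eps=b_\eps^{-1}\odot\psi_\eps$ is a pure rescaling check: $\odot$ preserves both the integral and the $L^1$-norm, so \ref{enu:intOneStrictDeltaNet} and \ref{enu:smallNegPartStrictDeltaNet} transfer verbatim; $\int x^\alpha\psi^b_\eps\diff{x}=b_\eps^{-|\alpha|}\int x^\alpha\psi_\eps\diff{x}=0$ gives \ref{enu:momentsStrictDeltaNet}; and $\partial^\alpha\psi^b_\eps(x)=b_\eps^{n+|\alpha|}(\partial^\alpha\psi_\eps)(b_\eps x)$ turns \ref{enu:moderateStrictDeltaNet} for $\psi_\eps$ into \ref{enu:moderateStrictDeltaNet} for $\psi^b_\eps$ with the exponent raised by $n+|\alpha|$.

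The heart of the matter, and the mechanism that produces \ref{enu:smallNegPartStrictDeltaNet}, is to build $\psi^{(N)}$ by \emph{separating mass from moment correction}. Fix once and for all $\theta\in\mathcal{D}(\R^n)$ with $\theta\ge0$, $\supp\theta\subseteq B_1(0)$ and $\int\theta\diff{x}=1$, and write $\mu_\alpha:=\int x^\alpha\theta\diff{x}$. For $\sigma\in(0,1]$ put $\theta_\sigma:=\sigma\odot\theta$, so that $\int\theta_\sigma\diff{x}=1$ while $\int x^\alpha\theta_\sigma\diff{x}=\sigma^{|\alpha|}\mu_\alpha$ becomes as small as we like as $\sigma\to0^+$. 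Next fix $\eta\in\mathcal{D}(\R^n)$ with $\supp\eta\subseteq B_1(0)$ and $\eta>0$ on a nonempty open set; the moment matrix $G:=\big(\int x^{\alpha+\beta}\eta\diff{x}\big)_{|\alpha|,|\beta|\le N}$ is then symmetric positive definite, so the functions $e_\beta:=\big(\sum_{|\alpha|\le N}(G^{-1})_{\beta\alpha}\,x^\alpha\big)\eta\in\mathcal{D}(\R^n)$ satisfy $\int x^\alpha e_\beta\diff{x}=\delta_{\alpha\beta}$ for $|\alpha|,|\beta|\le N$. Setting
\[
\psi^{(N)}:=\theta_\sigma-\sum_{1\le|\beta|\le N}\sigma^{|\beta|}\mu_\beta\,e_\beta
\]
kills exactly the moments $1\le|\alpha|\le N$, keeps $\int\psi^{(N)}\diff{x}=1$ and $\supp\psi^{(N)}\subseteq B_1(0)$, and yields $\int|\psi^{(N)}|\diff{x}\le 1+\sum_{1\le|\beta|\le N}\sigma^{|\beta|}|\mu_\beta|\int|e_\beta|\diff{x}$; since this sum is finite for each fixed $N$, choosing $\sigma=\sigma_N$ small enough forces the correction below $1/N$. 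This is precisely where the freedom to concentrate the nonnegative bump defeats the (possibly large) $L^1$-norms of the correcting basis.

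For $n=1$ I would additionally pin down the negative-axis mass. Choosing $\theta$ with $\int_{-\infty}^0\theta=d$ gives $\int_{-\infty}^0\theta_\sigma=d$, and the correction alters this only by $O(\sigma_N)$; to restore the exact value I add $\kappa\,w$, where $w\in\mathcal{D}(\R)$ with $\supp w\subseteq B_1(0)$ is chosen with $\int x^\alpha w\diff{x}=0$ for all $|\alpha|\le N$ but $\int_{-\infty}^0 w\ne0$ (such $w$ exists because the degree-$\le N$ polynomials cannot force $\int_{-\infty}^0 w=0$). Then neither the moments nor $\int\psi^{(N)}$ are disturbed, while a small $\kappa=O(\sigma_N)$ realizes $\int_{-\infty}^0\psi^{(N)}=d$ without spoiling \ref{enu:smallNegPartStrictDeltaNet}. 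For the moderateness \ref{enu:moderateStrictDeltaNet}, each $A(\alpha,N):=\sup_x|\partial^\alpha\psi^{(N)}(x)|$ is a finite constant; since $b_\eps\to+\infty$ I choose $\eps_N\downarrow0$ with $b_\eps\ge\max_{|\alpha|\le N}A(\alpha,N)$ for all $\eps\le\eps_N$, and set $N_\eps:=\max\{N\in\N\mid\eps\le\eps_N\}$. Then $N_\eps\to\infty$, and for every fixed $\alpha$, once $N_\eps\ge|\alpha|$ one has $\sup_x|\partial^\alpha\psi_\eps|=A(\alpha,N_\eps)\le b_\eps$, i.e.~\ref{enu:moderateStrictDeltaNet} with $p=1$.

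The main obstacle is \ref{enu:smallNegPartStrictDeltaNet}. Any compactly supported $\psi$ with $\int\psi=1$ and vanishing moments up to order $N$ must have a negative part, and the standard devices --- superpositions of dilates $\sum_k c_k(\lambda_k\odot\theta)$ solving a Vandermonde system, or Christoffel--Darboux kernels --- generate an $L^1$-overshoot that tends to \emph{grow} with $N$. The separation trick above reverses this: by concentrating the nonnegative mass at scale $\sigma_N$, the moments to be cancelled shrink like $\sigma_N^{|\alpha|}$, so the corrector can be taken with $L^1$-norm tending to $0$, whence $\int|\psi_\eps|\to1$. The remaining verifications --- the $\forall^0\eps$ bookkeeping for \ref{enu:momentsStrictDeltaNet}--\ref{enu:smallNegPartStrictDeltaNet} and the rescaling identities for $\psi^b_\eps$ --- are routine.
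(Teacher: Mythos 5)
Your construction is correct, and it is essentially self-contained, which is more than the paper offers: Lemma~\ref{lem:strictDeltaNet} is stated there as a recalled preliminary with the proof deferred to \cite{GKV,Gi-Ku-St15}, so there is no in-paper argument to compare against. Your two-step scheme (a single-$N$ existence claim plus a diagonal choice $\psi_\eps:=\psi^{(N_\eps)}$ calibrated against the growth of $b_\eps$) is the natural way to organize the proof, and the ``concentrate the positive mass, then correct the moments with a dual basis'' device is exactly the right mechanism for the delicate property \ref{enu:smallNegPartStrictDeltaNet}: since $\int x^\alpha\theta_\sigma = \sigma^{|\alpha|}\mu_\alpha$, the corrector has $L^1$-norm $O(\sigma)$ for fixed $N$, so $\int|\psi^{(N)}|$ can be pushed below $1+1/N$ even though the dual functions $e_\beta$ may have large $L^1$-norms. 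All the verifications you label routine (the $\forall^0\eps$ bookkeeping and the rescaling identities for $\psi_\eps^b$, where only \ref{enu:intOneStrictDeltaNet}--\ref{enu:smallNegPartStrictDeltaNet} are claimed) do check out. Three small points to tighten: (a) positive definiteness of $G=\bigl(\int x^{\alpha+\beta}\eta\bigr)$ requires $\eta\ge0$ everywhere, not merely $\eta>0$ on an open set (take $\eta=\theta$ and you are done); (b) the existence of $w$ with $\int x^k w=0$ for $k\le N$ but $\int_{-\infty}^0 w\ne0$ deserves one line --- the functional $w\mapsto\int_{-1}^0 w$ is not a linear combination of the moment functionals on $\mathcal{D}((-1,1))$ because $\mathbf{1}_{(-1,0)}$ is not a.e.\ equal to a polynomial, so the kernel intersection is not contained in its kernel; (c) define $N_\eps$ for all $\eps\in I$ (e.g.\ $N_\eps:=0$ when $\eps>\eps_1$) and require $(\eps_N)$ strictly decreasing so that the maximum defining $N_\eps$ is finite. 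None of these affects the substance.
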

\noindent Concerning embeddings of Schwartz distributions, we have
the following result, where $\csp{\Omega}:=\{[x_{\eps}]\in[\Omega]\mid\exists K\Subset\Omega\,\forall^{0}\eps:\ x_{\eps}\in K\}$
is called the set of \emph{compactly supported points in }$\Omega\subseteq\R^{n}$. 
\begin{theorem}
\label{thm:embeddingD'}Under the assumptions of Lemma \ref{lem:strictDeltaNet},
let $\Omega\subseteq\R^{n}$ be an open set and let $(\psi_{\eps}^{b})$
be the net defined in Lemma \ref{lem:strictDeltaNet}. Then the mapping
\[
\iota_{\Omega}^{b}:T\in\mathcal{E}'(\Omega)\mapsto\left[\left(T\ast\psi_{\eps}^{b}\right)(-)\right]\in\gsf(\csp{\Omega},\rti)
\]
uniquely extends to a sheaf morphism of real vector spaces 
\[
\iota^{b}:\mathcal{D}'\ra\gsf(\csp{(-)},\rti),
\]
and satisfies the following properties: 
\begin{enumerate}
\item If $b\ge\diff{\rho}^{-a}$ for some $a\in\R_{>0}$, then $\iota^{b}|_{\Coo(-)}:\Coo(-)\ra\gsf(\csp{(-)},\RC{\rho})$
is a sheaf morphism of algebras.
\item If $T\in\mathcal{E}'(\Omega)$ then $\text{\text{\emph{supp}}}(T)=\text{\emph{\text{supp}}}(\iota_{\Omega}^{b}(T))$.
\item $\lim_{\eps\to0^{+}}\int_{\Omega}\iota_{\Omega}^{b}(T)_{\eps}\cdot\phi=\langle T,\phi\rangle$
for all $\phi\in\mathcal{D}(\Omega)$ and all $T\in\mathcal{D}'(\Omega)$.
\item $\iota^{b}$ commutes with partial derivatives, i.e.~$\partial^{\alpha}\left(\iota_{\Omega}^{b}(T)\right)=\iota_{\Omega}^{b}\left(\partial^{\alpha}T\right)$
for each $T\in\mathcal{D}'(\Omega)$ and $\alpha\in\N$. 
\end{enumerate}
\end{theorem}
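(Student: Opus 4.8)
The plan is to build the embedding first on compactly supported distributions, then propagate it to all of $\mathcal{D}'$ by a \emph{locality} argument combined with the sheaf structure of the target, and finally to verify the four listed properties. First I would fix $T\in\mathcal{E}'(\Omega)$ and show that the net $f_\eps:=T\ast\psi_\eps^b\in\Coo(\R^n)$ defines a GSF of type $\csp{\Omega}\ra\rti$ in the sense of Definition \ref{def:netDefMap}. Smoothness of each $f_\eps$ is classical and one may take $\Omega_\eps=\R^n$, so the domain condition is automatic; the only real point is moderateness of all derivatives at compactly supported points. Writing $\partial^\alpha f_\eps=T\ast\partial^\alpha\psi_\eps^b$ and using that $T$ has finite order $m$ and compact support, for $x_\eps$ ranging in a fixed $K\comp\Omega$ one bounds $|\partial^\alpha f_\eps(x_\eps)|$ by $C\sum_{|\beta|\le m}\sup|\partial^{\alpha+\beta}\psi_\eps^b|$. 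The scaling $\psi_\eps^b=b_\eps^{-1}\odot\psi_\eps$, the moderateness estimate Lemma \ref{lem:strictDeltaNet}\ref{enu:moderateStrictDeltaNet}, and $\rho$-moderateness of $b$ then give the required $\rho$-moderate bound, and representative-independence follows from Theorem \ref{thm:propGSF}. Linearity of $\iota_\Omega^b$ in $T$ is immediate.

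The core step is the extension to arbitrary $T\in\mathcal{D}'(\Omega)$. I would first prove the decisive locality property: if $\Omega'\subseteq\Omega$ is open and $T|_{\Omega'}=0$, then $\iota_\Omega^b(T)|_{\csp{\Omega'}}=0$. Indeed, for a compactly supported point $x=[x_\eps]$ with $x_\eps\in K\comp\Omega'$ one has $d(x_\eps,\supp T)\ge d(K,\R^n\setminus\Omega')>0$, while $\supp\psi_\eps^b\subseteq\Eball_{b_\eps^{-1}}(0)$ shrinks to $0$ since $b_\eps\to+\infty$; hence $f_\eps(x_\eps)=0$ for $\eps$ small. Using the sheaf structure of $\gsf(\csp{(-)},\rti)$, I would then define $\iota_\Omega^b(T)$ locally: on each relatively compact open $\Omega'\comp\Omega$ pick $\chi\in\mathcal{D}(\Omega)$ with $\chi\equiv 1$ near $\overline{\Omega'}$, set $\iota_\Omega^b(T)|_{\csp{\Omega'}}:=\iota_\Omega^b(\chi T)|_{\csp{\Omega'}}$, and observe that locality makes this independent of $\chi$ and compatible on overlaps, so the pieces glue to a global GSF. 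Commutation with restrictions, hence the sheaf-morphism property, is built into the construction, and uniqueness holds because every distribution coincides, near a compactly supported point, with a compactly supported one. I expect this to be the main obstacle: one must check carefully that $\gsf(\csp{(-)},\rti)$ is a sheaf for the relevant restrictions and that the locality property is strong enough to make the local definitions patch coherently and uniquely.

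It then remains to verify the four properties. Property (3) is the standard delta-net computation $\int_\Omega f_\eps\,\phi=\langle T,\check\psi_\eps^b\ast\phi\rangle\to\langle T,\phi\rangle$, using $\int\psi_\eps^b=1$ (Lemma \ref{lem:strictDeltaNet}\ref{enu:intOneStrictDeltaNet}) and the shrinking support, so that $\check\psi_\eps^b\ast\phi\to\phi$ in $\mathcal{D}$; the general case reduces to the compactly supported one via a cutoff. Property (4) follows from $\partial^\alpha(T\ast\psi_\eps^b)=(\partial^\alpha T)\ast\psi_\eps^b$ together with Theorem \ref{thm:FR-forGSF}\ref{enu:defDer}, which represents the GSF derivative by the net of classical derivatives. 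Property (2) splits into two inclusions: $\supp\iota_\Omega^b(T)\subseteq\supp T$ is exactly the locality property above, while the reverse inclusion uses (3), since if $\iota_\Omega^b(T)$ vanished on $\csp{U}$ then $\langle T,\phi\rangle=0$ for every $\phi\in\mathcal{D}(U)$.

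Finally, for Property (1) the key lemma is that, when $b\ge\diff{\rho}^{-a}$, convolution reproduces smooth functions up to a negligible net. Taylor expanding $f(x_\eps-y)$ about $x_\eps$ and using the vanishing-moment conditions Lemma \ref{lem:strictDeltaNet}\ref{enu:momentsStrictDeltaNet} gives, on compacts, $|(f\ast\psi_\eps^b)(x_\eps)-f(x_\eps)|=O(b_\eps^{-q})=O(\rho_\eps^{aq})$ for every $q$, which is negligible. Consequently $\iota^b(f)$ equals the GSF of $f$ itself, and since $\iota^b(fg)$ is represented by $(fg)\ast\psi_\eps^b$ while $\iota^b(f)\,\iota^b(g)$ is represented by $(f\ast\psi_\eps^b)(g\ast\psi_\eps^b)$, both agreeing with $fg$ up to negligible nets, multiplicativity $\iota^b(fg)=\iota^b(f)\,\iota^b(g)$ follows; combined with the vector-space morphism property this yields the sheaf morphism of algebras.
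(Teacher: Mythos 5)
Your proposal is correct and follows what is essentially the standard construction used in the references the paper cites for this preliminary result (\cite{GKV,Gi-Ku-St15}): moderateness of $(T\ast\psi_\eps^b)$ via the finite order of $T\in\mathcal{E}'(\Omega)$ and the scaling estimates of Lemma \ref{lem:strictDeltaNet}, extension to $\mathcal{D}'$ by cut-offs plus the locality/shrinking-support argument and the sheaf property of $\gsf(\csp{(-)},\rti)$, and the Taylor-expansion-with-vanishing-moments argument for the algebra morphism on $\Coo$ when $b\ge\diff{\rho}^{-a}$. Note that the paper itself does not reproduce a proof of this theorem, so there is nothing to contrast your route with; the only point to be meticulous about is the one you already flag, namely that the gluing of the locally defined pieces $\iota_{\Omega}^{b}(\chi T)|_{\csp{\Omega'}}$ genuinely invokes the sheaf property of GSF for the Fermat topology together with linearity and the locality lemma to get independence of $\chi$ on overlaps.
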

Concerning the embedding of Colombeau generalized functions, we recall
that the special Colombeau algebra on $\Om$ is defined as the quotient
$\gs(\Om):=\esm(\Om)/\ns(\Om)$ of \emph{moderate nets} over \emph{negligible
nets}, where the former is 
\begin{multline*}
\esm(\Om):=\{(u_{\eps})\in\cinfty(\Omega)^{I}\mid\forall K\comp\Om\,\forall\alpha\in\N^{n}\,\exists N\in\N:\sup_{x\in K}|\partial^{\alpha}u_{\eps}(x)|=O(\eps^{-N})\}
\end{multline*}
and the latter is 
\begin{multline*}
\ns(\Om):=\{(u_{\eps})\in\cinfty(\Omega)^{I}\mid\forall K\comp\Om\,\forall\alpha\in\N^{n}\,\forall m\in\N:\sup_{x\in K}|\partial^{\alpha}u_{\eps}(x)|=O(\eps^{m})\}.
\end{multline*}
Using $\rho=(\eps)$, we have the following compatibility result: 
\begin{theorem}
\label{thm:inclusionCGF}A Colombeau generalized function $u=(u_{\eps})+\ns(\Om)^{d}\in\gs(\Omega)^{d}$
defines a generalized smooth map $u:[x_{\eps}]\in\csp{\Omega}\longrightarrow[u_{\eps}(x_{\eps})]\in\Rtil^{d}$
which is locally Lipschitz on the same neighborhood of the Fermat
topology for all derivatives. This assignment provides a bijection
of $\gs(\Omega)^{d}$ onto $\gsf(\csp{\Omega},\rti^{d})$ for every
open set $\Omega\subseteq\R^{n}$. 
\end{theorem}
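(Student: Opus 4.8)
The plan is to treat the three assertions separately: that each Colombeau function induces a well-defined GSF with the stated Fermat--Lipschitz regularity, that the assignment is injective, and that it is surjective.

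For the forward direction, fix a representative $(u_\eps)$ of $u$. Any $x=[x_\eps]\in\csp{\Omega}$ satisfies $x_\eps\in K$ for some $K\comp\Omega$ and all small $\eps$, so the moderateness of $(u_\eps)$ on $K$ immediately gives $(\partial^\alpha u_\eps(x_\eps))\in\R_{\scriptscriptstyle\rho}^d$ for every $\alpha$; together with the inclusion $\csp{\Omega}\subseteq\sint{\Omega}$ this verifies the defining conditions of Def.~\ref{def:netDefMap}. Independence of the value $[u_\eps(x_\eps)]$ from the representatives $(x_\eps)$ and $(u_\eps)$ follows from the mean value theorem on a slightly larger compact $K'\comp\Omega$ with $K\subseteq\mathrm{int}(K')$: a moderate gradient times a negligible increment is negligible, and a negligible perturbation of $(u_\eps)$ changes the value negligibly. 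The same estimate, applied to $\partial^\alpha u_\eps$ on $K'$, yields the Fermat-local-Lipschitz property: choosing a \emph{real} radius $s>0$ with $B^{{\scriptscriptstyle\text{F}}}_s(x)\subseteq\sint{\mathrm{int}(K')}$ gives, for all $y,z\in B^{{\scriptscriptstyle\text{F}}}_s(x)$, the bound $|\partial^\alpha u(y)-\partial^\alpha u(z)|\le L_\alpha|y-z|$ with $L_\alpha:=[\sup_{K'}|\nabla\partial^\alpha u_\eps|]\in\Rtil$. Crucially the neighborhood $B^{{\scriptscriptstyle\text{F}}}_s(x)$ depends only on $K'$ and not on $\alpha$, which is precisely the assertion that all derivatives are Lipschitz on the same Fermat neighborhood.

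Injectivity is the point value characterization of Colombeau functions. I would show that $[u_\eps(x_\eps)]=0$ for every $x\in\csp{\Omega}$ forces $(u_\eps)\in\ns(\Omega)^d$, arguing by contradiction: if $\sup_{x\in K}|u_\eps(x)|$ fails to be negligible for some $K\comp\Omega$, there are $m\in\N$, a sequence $\eps_k\downarrow0$, and points $y_k\in K$ with $|u_{\eps_k}(y_k)|>\eps_k^m$; assembling the $y_k$ into a net $(x_\eps)$ with $x_\eps\in K$ (and arbitrary values inside $K$ for the remaining $\eps$) produces $[x_\eps]\in\csp{\Omega}$ with $[u_\eps(x_\eps)]\neq0$, a contradiction. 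This is the Kunzinger--Oberguggenberger point value theorem and may alternatively be cited.

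Surjectivity is the main obstacle. Given $f\in\gsf(\csp{\Omega},\rti^d)$, Thm.~\ref{thm:propGSF}~\ref{enu:globallyDefNet} furnishes a net $v_\eps\in\cinfty(\R^n,\R^d)$ with $f=[v_\eps(-)]|_{\csp{\Omega}}$, and it suffices to prove $(v_\eps|_\Omega)\in\esm(\Omega)^d$, for then $u:=(v_\eps|_\Omega)+\ns(\Omega)^d$ clearly induces $f$. The difficulty is precisely the passage from the moderateness that holds \emph{at each} compactly supported point to \emph{uniform} moderateness on a compact set. I expect a diagonal argument by contradiction to achieve this: were $\sup_{x\in K}|\partial^\alpha v_\eps(x)|$ not $O(\eps^{-N})$ for any $N$ on some $K\comp\Omega$, then for each $N$ there would be arbitrarily small $\eps$ and points $x_\eps\in K$ with $|\partial^\alpha v_\eps(x_\eps)|>\eps^{-N}$; diagonalizing over $N$ yields a net $(x_\eps)$ in $K$ for which $(\partial^\alpha v_\eps(x_\eps))$ is not $\rho$-moderate, while $[x_\eps]\in\csp{\Omega}$---contradicting the defining moderateness condition of Def.~\ref{def:netDefMap} for the GSF $f$. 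The delicate points are ensuring the diagonal net is genuinely a compactly supported generalized point and that the failure of moderateness survives the diagonalization; once $(v_\eps|_\Omega)\in\esm(\Omega)^d$ is established, surjectivity, and hence the bijection, follows.
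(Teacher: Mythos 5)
Your proposal is correct, and it follows essentially the same route as the source: the paper itself only recalls this theorem and refers to \cite{GKV} for the proof, where the forward direction is exactly the uniform-moderateness-on-compacta argument you give, injectivity is the Colombeau point value characterization, and surjectivity is recovered by the same contradiction/diagonalization over a compact exhaustion that this paper isolates as Lemma~\ref{lem:fromPointwiseToCompact}. No gaps.
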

For GSF, suitable generalizations of many classical theorems of differential
and integral calculus hold: intermediate value theorem, mean value
theorems, Taylor formulas in different forms, a sheaf property for
the Fermat topology, and the extreme value theorem on internal sharply
bounded sets (see \cite{Gi-Ku-St15}). The latter are called \emph{functionally
compact} subsets of $\rti^{n}$ and serve as compact sets for GSF.
A theory of compactly supported GSF has been developed in \cite{GiKu16},
and it closely resembles the classical theory of LF-spaces of compactly
supported smooth functions. It results that for suitable functionally
compact subsets, the corresponding space of compactly supported GSF
contains extensions of all Colombeau generalized functions, and hence
also of all Schwartz distributions. Finally, in these spaces it is
possible to prove the Banach fixed point theorem and a corresponding
Picard-Lindelöf theorem, see \cite{GiLu16}.

\section{Local inverse function theorems}

As in the case of classical smooth functions, any infinitesimal criterion
for the invertibility of generalized smooth functions will rely on
the invertibility of the corresponding differential. We therefore
note the following analogue of \cite[Lemma 1.2.41]{GKOS} (whose proof
transfers literally to the present situation): 
\begin{lemma}
\label{lem:invertmatrix} Let $A\in\rti^{n\times n}$ be a square
matrix. The following are equivalent: 
\begin{enumerate}
\item $A$ is nondegenerate, i.e., $\xi\in\rti^{n}$, $\xi^{t}A\eta=0$
$\forall\eta\in\rti^{n}$ implies $\xi=0$. 
\item $A:\rti^{n}\to\rti^{n}$ is injective. 
\item $A:\rti^{n}\to\rti^{n}$ is surjective. 
\item $\det(A)$ is invertible. 
\end{enumerate}
\end{lemma}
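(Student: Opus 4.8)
The plan is to show that each of (1), (2), (3) is equivalent to condition (4), working throughout with a fixed representative $A=[A_\eps]$, $A_\eps\in\R^{n\times n}$. Since the determinant is a polynomial in the matrix entries, it respects moderateness and the equivalence relation, so $\det(A)=[\det(A_\eps)]\in\rti$ is well defined. The implications starting from (4) are the routine ones. First I would prove $(4)\Rightarrow(2),(3)$: if $\det(A)$ is invertible, then Cramer's rule (the adjugate divided by the determinant, again a matrix of moderate nets) produces a two-sided inverse $A^{-1}\in\rti^{n\times n}$, so $A$ is bijective and in particular injective and surjective. Because $\det(A^{t})=\det(A)$ is invertible as well, the same argument makes $A^{t}$ bijective, hence injective; and the hypothesis of (1), namely $\xi^{t}A\eta=0$ for all $\eta$, is equivalent to $A^{t}\xi=0$ (let $\eta$ range over the standard basis), so injectivity of $A^{t}$ is exactly condition (1). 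This yields $(4)\Rightarrow(1)$.

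The substance of the lemma is the converse, which I would prove by contraposition. By Lemma~\ref{lem:mayer} applied to $|\det(A)|\ge0$, non-invertibility of $\det(A)$ means that for every $m\in\N$ the inequality $|\det(A_\eps)|>\rho_\eps^{m}$ fails on arbitrarily small $\eps$. A diagonal extraction then produces a strictly decreasing sequence $\eps_m\downarrow0$ with $|\det(A_{\eps_m})|\le\rho_{\eps_m}^{m}$ for all $m$; set $S:=\{\eps_m\mid m\in\N\}$, which accumulates at $0$. The main obstacle is precisely that non-invertibility of the determinant only forces it to be small on such a sparse set $S$, not on a full right-neighborhood of $0$; the naive attempt to read off a kernel vector from a nearly singular $A_\eps$ therefore fails, because on $I\setminus S$ nothing is small. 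I resolve this by \emph{supporting} the kernel vector on $S$, exploiting that $\chi_S$ is a nonzero idempotent of $\rti$ (as $S$ accumulates at $0$).

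Concretely, for $\eps\in S$ choose a unit vector $u_\eps\in\R^{n}$ with $|A_\eps u_\eps|=\sigma_{\min}(\eps)$, the smallest singular value of $A_\eps$, and set $u_\eps:=0$ for $\eps\notin S$; then $\xi:=[u_\eps]$ is moderate (bounded by $1$), so $\xi\in\rti^{n}$. Since $|\xi|^{2}=[\chi_S]$ is a nonzero idempotent, $\xi\ne0$. Using $\sigma_{\min}(\eps)\le|\det(A_\eps)|^{1/n}$ (the smallest singular value is at most the geometric mean $(\prod_i\sigma_i)^{1/n}$ of all singular values), one gets $|A_{\eps_m}u_{\eps_m}|\le\rho_{\eps_m}^{m/n}$ on $S$ and $=0$ off $S$, whence $(A_\eps u_\eps)\sim_{\rho}0$ by the diagonal choice of $S$. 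Thus $A\xi=0$ with $\xi\ne0$, so $A$ is not injective: this is $\neg(2)$. Running the identical construction for $A^{t}$—legitimate because $\det(A^{t})=\det(A)$ is again non-invertible—yields $\zeta\ne0$ with $A^{t}\zeta=0$ and $\zeta^{t}\zeta=[\chi_S]\ne0$. This is exactly $\neg(1)$, and it also gives $\neg(3)$, since for every $w\in\rti^{n}$ we have $\zeta^{t}(Aw)=(A^{t}\zeta)^{t}w=0$ while $\zeta^{t}\zeta\ne0$, so $\zeta\notin\operatorname{im}(A)$ and $A$ is not surjective. Together with the implications from (4), this closes the equivalence.
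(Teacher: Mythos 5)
Your proof is correct, and it is essentially the argument the paper relies on: the paper gives no proof of its own but states that the proof of \cite[Lemma 1.2.41]{GKOS} ``transfers literally'', and that proof proceeds exactly as you do --- adjugate/Cramer for the direction from (4), and for the converse a contraposition via Lemma \ref{lem:mayer} that extracts a sequence $\eps_m\downarrow 0$ of near-singular matrices and builds a nonzero kernel vector supported on that sequence. Your use of the smallest singular value and the bound $\sigma_{\min}\le|\det(A_\eps)|^{1/n}$, and the reduction of nondegeneracy to injectivity of $A^{t}$, are sound.
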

\begin{theorem}
\label{thm:localIFTSharp}Let $X\sse\rti^{n}$, let $f\in\gsf(X,\rti^{n})$
and suppose that for some $x_{0}$ in the sharp interior of $X$,
$Df(x_{0})$ is invertible in $L(\rti^{n},\rti^{n})$. Then there
exists a sharp neighborhood $U\sse X$ of $x_{0}$ and a sharp neighborhood
$V$ of $f(x_{0})$ such that $f:U\to V$ is invertible and $f^{-1}\in\gsf(V,U)$. 
\end{theorem}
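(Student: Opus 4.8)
The plan is to follow the classical three-step strategy---reduction to a perturbation of the identity, a contraction argument for the set-theoretic inverse, and then regularity of that inverse---while respecting the non-Archimedean bookkeeping of moderateness and of the sharp topology. First I would normalise: translating in domain and codomain (both translations are GSF) I may assume $x_0=0$ and $f(0)=0$. Put $A:=Df(0)\in L(\rti^n,\rti^n)$; by hypothesis $A$ is invertible, so by Lemma~\ref{lem:invertmatrix} $\det A$ is invertible in $\rti$ and $A^{-1}$ is again an $\rti$-linear map, hence a GSF. Replacing $f$ by $A^{-1}\circ f$---which is a GSF by closure of $\gsf$ under composition (Thm.~\ref{thm:propGSF}\,\ref{enu:category}) and has differential $\mathrm{Id}$ at $0$---I may assume $Df(0)=\mathrm{Id}$; any inverse of $A^{-1}\circ f$ yields one for $f$ upon composing with $A^{-1}$. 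Since the entries of $x\mapsto Df(x)$ are GSF by Thm.~\ref{thm:FR-forGSF}, $Df$ is sharply continuous (Thm.~\ref{thm:propGSF}\,\ref{enu:GSF-cont}), so there is an invertible $\delta\in\rti_{>0}$ with $B_{\delta}(0)\subseteq X$ and $\no{Df(x)-\mathrm{Id}}\le\tfrac12$ on $B_{\delta}(0)$.

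Next I would produce the inverse as a set-map. Fix $r\in\rti_{>0}$ with $\overline{B_{r}(0)}\subseteq B_{\delta}(0)$ and, for $y\in\rti^n$, set $g_y(x):=y+x-f(x)$, so that $f(x)=y$ is equivalent to the fixed-point equation $g_y(x)=x$. Since $Dg_y(x)=\mathrm{Id}-Df(x)$ has norm $\le\tfrac12$ on the convex ball $B_{\delta}(0)$, the mean value inequality for GSF (see \cite{Gi-Ku-St15}) gives $|g_y(x_1)-g_y(x_2)|\le\tfrac12|x_1-x_2|$, and for $|y|\le r/2$ one checks $g_y(\overline{B_{r}(0)})\subseteq\overline{B_{r}(0)}$ using $g_y(0)=y$. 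As $\overline{B_{r}(0)}$ is a sharply bounded internal set, hence functionally compact, the Banach fixed-point theorem in this setting (\cite{GiLu16}) yields a unique fixed point $x=:f^{-1}(y)$. Putting $V:=B_{r/2}(0)$ and $U:=f^{-1}(V)\cap B_{r}(0)$ (sharply open by continuity of $f$) then makes $f\colon U\to V$ a bijection.

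The main step I expect to be delicate is upgrading this set-map to a GSF, and for this I would return to the defining net $f_\eps\in\cinfty(\Omega_\eps,\R^n)$. Since $\det A$ is invertible, Lemma~\ref{lem:mayer} gives $m\in\N$ with $|\det Df_\eps(0)|>\rho_\eps^{m}$ for $\eps$ small, so $\no{Df_\eps(0)^{-1}}$ is $\rho$-moderate; combined with the moderate bounds on $\partial^\alpha f_\eps$ over small Euclidean balls (Thm.~\ref{thm:propGSF}\,\ref{enu:modOnEpsDepBall}), a \emph{quantitative} form of the classical inverse function theorem provides, for $\eps$ small, a smooth local inverse $g_\eps=f_\eps^{-1}$ on a Euclidean ball $\Eball_{b_\eps}(0)$ whose radius is bounded below, $b_\eps\ge\rho_\eps^{M}$ for some $M\in\N$. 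Choosing $s:=\diff{\rho}^{M+1}$ gives $B_{s}(0)\subseteq\sint{\Eball_{b_\eps}(0)}$ by Thm.~\ref{thm:strongMembershipAndDistanceComplement}, so $(g_\eps)$ is defined on a genuine sharp neighbourhood of $0$. Differentiating $Dg_\eps=(Df_\eps\circ g_\eps)^{-1}$ and using Cramer's rule together with the Fa\`a di Bruno formula, each $\partial^\beta g_\eps$ is a polynomial in the $\partial^\alpha f_\eps\circ g_\eps$ divided by a power of $\det(Df_\eps\circ g_\eps)$; the moderate upper bounds on the numerators and the lower bound $|\det Df_\eps|>\rho_\eps^{m}$ in the denominator then force all derivatives of $g_\eps$ to be moderate. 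Hence, after shrinking $V$ to $B_{s}(0)$ and $U$ accordingly, $[g_\eps(-)]|_V$ lies in $\gsf(V,U)$, and by the uniqueness in the fixed-point argument it coincides with the set-theoretic inverse $f^{-1}$.

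The hard part is precisely the moderate lower bound $b_\eps\ge\rho_\eps^{M}$ on the radius of the representative inverses: without it $\sint{\Eball_{b_\eps}(0)}$ need not contain any sharp ball of invertible radius, and $f^{-1}$ would fail to live on a sharp neighbourhood. Securing it requires tracking the explicit constants in the quantitative classical theorem (the inverse is a diffeomorphism on a ball of radius comparable to $\no{Df_\eps(0)^{-1}}^{-1}$ divided by the local Lipschitz constant of $Df_\eps$), both of which are controlled in a $\rho$-moderate way by Thm.~\ref{thm:propGSF}\,\ref{enu:locLipSharp}--\ref{enu:modOnEpsDepBall}; this is where the non-Archimedean content of the argument is concentrated.
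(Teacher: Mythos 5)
Your strategy for the heart of the proof---applying a quantitative form of the classical inverse function theorem $\eps$-wise, with the radius of invertibility of $f_\eps$ bounded below by a power $\rho_{\eps}^{M}$ thanks to the invertibility of $\det Df(x_{0})$ and the moderate bounds on the derivatives of $f_{\eps}$, and then deducing moderateness of all derivatives of $g_{\eps}=f_{\eps}^{-1}$ from Cramer's rule together with the lower bound on $|\det Df_{\eps}|$---is exactly the paper's argument (the paper cites \cite{EG:13} for the quantitative classical statement and uses Hadamard's inequality to pass between invertibility of $\no{Df(x_{0})^{-1}}$ and the determinant bound; it does not bother with your preliminary normalization to $Df(0)=\mathrm{Id}$). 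The one genuine divergence is your construction of the set-theoretic inverse by a contraction argument before doing the $\eps$-wise work: the paper skips this entirely, since the $\eps$-wise diffeomorphisms $f_{\eps}|_{U_{\eps}}:U_{\eps}\to V_{\eps}$ already furnish the inverse as the net $(g_{\eps})$, so the fixed-point step buys nothing that the second half of your own proof does not deliver.

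That detour is also where your argument has a real gap. A contraction with factor $\tfrac{1}{2}$ does \emph{not} converge under Picard iteration in the sharp topology: $2^{-n}$ is a standard positive real, hence it never lies in the ball $B_{\diff{\rho}}(0)$ of infinitesimal radius, so $2^{-n}\not\to0$ sharply and the classical iteration argument does not transplant. Any Banach fixed point theorem for GSF must therefore either impose a condition on the contraction constant stronger than being a real number in $(0,1)$ or be proved $\eps$-wise, and you cannot invoke \cite{GiLu16} as a black box here without checking that its hypotheses are met with the constant $\tfrac12$. The step is repairable (apply the classical Banach theorem for each fixed $\eps$ and verify moderateness of the net of fixed points), but better to drop it. If you do drop it, you must then supply the verification you currently delegate to ``uniqueness in the fixed-point argument'': namely that $[g_{\eps}(y_{\eps})]$ lies in the \emph{strongly} internal set $U=\sint{U_{\eps}}$ independently of the chosen representative $(y_{\eps})$ of a point of $V=\sint{V_{\eps}}$. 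This is required for $f^{-1}\in\gsf(V,U)$ and is precisely the content of the final paragraph of the paper's proof, where it is obtained from the mean value theorem applied to $f_{\eps}$ and the moderateness of $f'$.
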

\begin{proof}
Thm.~\ref{thm:propGSF}.\ref{enu:globallyDefNet} entails that $f$
can be defined by a globally defined net $f_{\eps}\in\cinfty(\R^{n},\R^{n})$.
Hadamard's inequality (cf.~\cite[Prop. 3.43]{EDiss}) implies $\no{Df(x_{0})^{-1}}\ge\sqrt[n]{\frac{1}{C}\left|\det\left(Df(x_{0})^{-1}\right)\right|}$,
where $C\in\R_{>0}$ is a universal constant that only depends on
the dimension $n$. Thus, by Lemma \ref{lem:invertmatrix} and Lemma
\ref{lem:mayer}, $\det{Df(x_{0})}$ and consequently also $a:=\no{Df(x_{0})^{-1}}$
is invertible. Next, pick positive invertible numbers $b$, $r\in\rti$
such that $ab<1$, $B_{2r}(x_{0})\sse X$ and 
\[
\no{Df(x_{0})-Df(x)}<b
\]
for all $x\in B_{2r}(x_{0})$. Such a choice of $r$ is possible since
every derivative of $f$ is continuous with respect to the sharp topology
(see Thm.~\ref{thm:propGSF}.\ref{enu:GSF-cont} and Thm.~\ref{thm:FR-forGSF}.\ref{enu:defDer}).
Pick representatives $(a_{\eps})$, $(b_{\eps})$ and $(r_{\eps})$
of $a$, $b$ and $r$ such that for all $\eps\in I$ we have $b_{\eps}>0$,
$a_{\eps}b_{\eps}<1$, and $r_{\eps}>0$. Let $(x_{0\eps})$ be a
representative of $x_{0}$. Since $[B_{r_{\eps}}(x_{0\eps})]\subseteq B_{2r}(x_{0})$,
by Lemma \ref{Lem:fromInclusionTo-epsSmallIncl} we can also assume
that $B_{r_{\eps}}(x_{0\eps})\sse\Omega_{\eps}$, and $\no{Df_{\eps}(x_{0\eps})-Df_{\eps}(x)}<b_{\eps}$
for all $x\in U_{\eps}:=B_{r_{\eps}}(x_{0\eps})$. Now let $c_{\eps}:=\frac{a_{\eps}}{1-a_{\eps}b_{\eps}}$.
Then $c:=[c_{\eps}]>0$ and by \cite[Th. 6.4]{EG:13} we obtain for
each $\eps\in I$: 
\begin{enumerate}[label=(\alph*)]
\item \label{enu:x_0}For all $x\in U_{\eps}:=B_{r_{\eps}}(x_{0\eps})$,
$Df_{\eps}(x)$ is invertible and $\no{Df_{\eps}(x)^{-1}}\le c_{\eps}$. 
\item $V_{\eps}:=f_{\eps}(B_{r_{\eps}}(x_{0\eps}))$ is open in $\R^{n}$. 
\item $f_{\eps}|_{U_{\eps}}:U_{\eps}\ra V_{\eps}$ is a diffeomorphism,
and 
\item \label{enu:y_0}setting $y_{0\eps}:=f_{\eps}(x_{0\eps})$, we have
$B_{r_{\eps}/c_{\eps}}(y_{0\eps})\sse f_{\eps}(B_{r_{\eps}}(x_{0\eps}))$. 
\end{enumerate}
The sets $U:=\sint{U_{\eps}}=B_{r}(x_{0})\subseteq X$ and $V:=\sint{V_{\eps}}$
are sharp neighborhoods of $x_{0}$ and $f(x_{0})$, respectively,
by \ref{enu:y_0}, and so it remains to prove that $[f_{\eps}|_{U_{\eps}}^{-1}(-)]\in\gsf(V,U)$.

We first note that by \ref{enu:x_0}, $\no{Df_{\eps}(x)^{-1}}\le c_{\eps}$
for all $x\in B_{r_{\eps}}(x_{0\eps})$, which by Hadamard's inequality
implies 
\begin{equation}
|\det(Df_{\eps}(x))|\ge\frac{1}{C\cdot c_{\eps}^{n}}\qquad(x\in B_{r_{\eps}}(x_{0\eps})).\label{detlow}
\end{equation}
Now for $[y_{\eps}]\in V$ and $1\le i,j\le n$ we have (see e.g.~\cite[(3.15)]{EDiss})
\begin{equation}
\partial_{j}(f_{\eps}^{-1})^{i}(y_{\eps})=\frac{1}{\det(Df_{\eps}(f_{\eps}^{-1}(y_{\eps})))}\cdot P_{ij}((\partial_{s}f_{\eps}^{r}(f_{\eps}^{-1}(y_{\eps})))_{r,s}),\label{derest}
\end{equation}
where $P_{ij}$ is a polynomial in the entries of the matrix in its
argument. Since $[f_{\eps}^{-1}(y_{\eps})]\in U\subseteq X$, it follows
from \eqref{detlow} and the fact that $f|_{U}\in\gsf(U,\rti^{n})$
that 
\[
(\partial_{j}(f_{\eps}^{-1})^{i}(y_{\eps}))\in\R_{{\scriptscriptstyle \rho}}^{n}.
\]
Higher order derivatives can be treated analogously, thereby establishing
that every derivative of $g_{\eps}:=f_{\eps}|_{U_{\eps}}^{-1}$ is
moderate. To prove the claim, it remains to show that $[g_{\eps}(y_{\eps})]\in U=\sint{U_{\eps}}$
for all $[y_{\eps}]\in V=\sint{V_{\eps}}$. Since $g_{\eps}:V_{\eps}\ra U_{\eps}$,
we only prove that if $(x_{\eps})\sim_{\rho}(g_{\eps}(y_{\eps}))$,
then also $x_{\eps}\in U_{\eps}$ for $\eps$ small. We can set $y'_{\eps}:=f_{\eps}(x{}_{\eps})$
because $f_{\eps}$ is defined on the entire $\R^{n}$. By the mean
value theorem applied to $f_{\eps}$ and the moderateness of $f'$,
we get 
\[
\no{y'_{\eps}-y_{\eps}}=\no{f_{\eps}(x{}_{\eps})-f_{\eps}(g_{\eps}(y_{\eps}))}\le\rho_{\eps}^{N}\cdot\no{x_{\eps}-g_{\eps}(y_{\eps})}.
\]
Therefore $(y'_{\eps})\sim_{\rho}(y_{\eps})$ and hence $y'_{\eps}\in V_{\eps}$
and $g_{\eps}(y'_{\eps})=x_{\eps}\in U_{\eps}$ for $\eps$ small.
\hspace*{\fill}\qed 
\end{proof}

From Thm.~\ref{thm:propGSF}.\ref{enu:GSF-cont}, we know that any
generalized smooth function is sharply continuous. Thus we obtain: 
\begin{corollary}
\label{cor:localDiff}Let $X\sse\rti^{n}$ be a sharply open set,
and let $f\in\gsf(X,\rti^{n})$ be such that $Df(x)$ is invertible
for each $x\in X$. Then $f$ is a local homeomorphism with respect
to the sharp topology. In particular, it is an open map. 
\end{corollary}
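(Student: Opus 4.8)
The plan is to deduce the statement directly from the local inverse function theorem (Thm.~\ref{thm:localIFTSharp}) together with the sharp continuity of generalized smooth functions (Thm.~\ref{thm:propGSF}.\ref{enu:GSF-cont}), and then to invoke the elementary topological fact that every local homeomorphism is an open map.

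First I would fix an arbitrary $x_0\in X$. Since $X$ is sharply open, $x_0$ lies in the sharp interior of $X$, and by hypothesis $Df(x_0)$ is invertible in $L(\rti^n,\rti^n)$. Hence Thm.~\ref{thm:localIFTSharp} applies and produces a sharply open neighborhood $U\subseteq X$ of $x_0$ and a neighborhood $V$ of $f(x_0)$ such that $f|_U:U\to V$ is a bijection with $(f|_U)^{-1}=f^{-1}\in\gsf(V,U)$. In the construction underlying that theorem, $V=\sint{V_\eps}$ is a strongly internal set and is therefore sharply open by Thm.~\ref{thm:strongMembershipAndDistanceComplement}.\ref{enu:internalAreClosed}; moreover $f(U)=V$ because $f|_U$ is onto $V$.

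Next I would verify that $f|_U$ is a homeomorphism onto the sharply open set $V$. Its forward continuity is immediate, as $f$ is a GSF and hence sharply continuous by Thm.~\ref{thm:propGSF}.\ref{enu:GSF-cont}; the continuity of the inverse follows by the same result applied to $f^{-1}\in\gsf(V,U)$. Since $x_0\in X$ was arbitrary, this already establishes that $f$ is a local homeomorphism for the sharp topology.

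Finally, to obtain that $f$ is open I would argue in the usual way: given any sharply open $W\subseteq X$ and any $y=f(x)\in f(W)$, choose a sharply open $U$ as above around $x$; then $W\cap U$ is sharply open, and since $f|_U$ is a homeomorphism onto the sharply open set $f(U)$, the image $f(W\cap U)$ is sharply open, contains $y$, and is contained in $f(W)$. Thus $f(W)$ is a sharp neighborhood of each of its points, i.e.\ it is sharply open. I do not expect a genuine obstacle here, since essentially all the content is already packaged in Thm.~\ref{thm:localIFTSharp}; the only points requiring minimal care are that the neighborhood $V$ furnished by that theorem is genuinely sharply open (true because it is a strongly internal set) and that the local inverse is sharply continuous (true because it is itself a GSF).
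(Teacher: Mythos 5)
Your argument is correct and is essentially the paper's own: the corollary is stated there as an immediate consequence of Thm.~\ref{thm:localIFTSharp} together with the sharp continuity of GSF (Thm.~\ref{thm:propGSF}.\ref{enu:GSF-cont}), exactly the two ingredients you use. You merely spell out the routine topological details (openness of $V=\sint{V_{\eps}}$ and the standard fact that a local homeomorphism is an open map) that the paper leaves implicit.
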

Any such map $f$ will therefore be called a local generalized diffeomorphism.
If $f\in\gsf(X,Y)$ possesses an inverse in $\gsf(Y,X)$, then it
is called a global generalized diffeomorphism.

Following the same idea we used in the proof of Thm.~\ref{thm:localIFTSharp},
we can prove a sufficient condition to have a local generalized diffeomorphism
which is defined in a large neighborhood of $x_{0}$: 
\begin{theorem}
\label{thm:localIFTFermat}Let $X\sse\rti^{n}$, let $f\in\gsf(X,\rti^{n})$
and suppose that for some $x_{0}$ in the Fermat interior of $X$,
$Df(x_{0})$ is invertible in $L(\rti^{n},\rti^{n})$. Assume that
$\no{Df(x_{0})^{-1}}$ is finite, i.e.~$\no{Df(x_{0})^{-1}}\le k$
for some $k\in\R_{>0}$, and $Df$ is Fermat continuous. Then there
exists a large neighborhood $U\sse X$ of $x_{0}$ and a large neighborhood
$V$ of $f(x_{0})$ such that $f:U\to V$ is invertible and $f^{-1}\in\gsf(V,U)$.
\end{theorem}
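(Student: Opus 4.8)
The plan is to mirror the proof of Theorem~\ref{thm:localIFTSharp} almost verbatim, replacing the sharp topology by the Fermat topology throughout and exploiting the extra finiteness hypotheses to ensure that all the constants and neighborhoods can be taken at the \emph{Fermat} scale (i.e.~with $\R_{>0}$-radii rather than merely invertible radii). First I would invoke Thm.~\ref{thm:propGSF}.\ref{enu:globallyDefNet} to get a globally defined net $f_\eps\in\cinfty(\R^n,\R^n)$. Since $\no{Df(x_0)^{-1}}\le k$ for some $k\in\R_{>0}$, I can set $a:=k\in\R_{>0}$ directly, and then pick $b\in\R_{>0}$ finite with $ab<1$; crucially $b$ is now a \emph{finite} real number, not just an invertible generalized one. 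The key difference from the sharp case is that the radius $r$ on which $\no{Df(x_0)-Df(x)}<_\R b$ holds can now be chosen in $\R_{>0}$: this is exactly where the hypothesis that $Df$ is Fermat continuous is needed, since Fermat continuity of $Df$ at the point $x_0$ (which lies in the Fermat interior of $X$) guarantees a Fermat-ball $B^{{\scriptscriptstyle \text{F}}}_{2r}(x_0)\sse X$ of real radius on which this estimate holds.

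Next I would choose representatives $(a_\eps),(b_\eps),(r_\eps)$ with $b_\eps>0$, $a_\eps b_\eps<1$, $r_\eps>0$ for all $\eps$, noting that since $a,b,r$ are now finite and real-radius quantities we may in fact take $a_\eps,b_\eps,r_\eps$ bounded away from $0$ and $\infty$. Setting $c_\eps:=\frac{a_\eps}{1-a_\eps b_\eps}$, the number $c=[c_\eps]$ is then \emph{finite}. Applying \cite[Th. 6.4]{EG:13} exactly as before yields, for each $\eps$, the four conclusions \ref{enu:x_0}--\ref{enu:y_0}: invertibility of $Df_\eps(x)$ with $\no{Df_\eps(x)^{-1}}\le c_\eps$ on $U_\eps:=\Eball_{r_\eps}(x_{0\eps})$, openness of $V_\eps:=f_\eps(U_\eps)$, the diffeomorphism property of $f_\eps|_{U_\eps}$, and the inclusion $\Eball_{r_\eps/c_\eps}(y_{0\eps})\sse V_\eps$ with $y_{0\eps}:=f_\eps(x_{0\eps})$. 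Because $c$ is finite, $r/c$ is an honest positive real-radius quantity, so this last inclusion shows $V:=\sint{V_\eps}$ is a \emph{large} (Fermat) neighborhood of $f(x_0)$, and similarly $U:=\sint{U_\eps}=B^{{\scriptscriptstyle \text{F}}}_r(x_0)$ is a large neighborhood of $x_0$.

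The moderateness of all derivatives of $g_\eps:=f_\eps|_{U_\eps}^{-1}$ follows exactly as in the sharp case: Hadamard's inequality and the bound $\no{Df_\eps(x)^{-1}}\le c_\eps$ give the determinant lower bound, and the cofactor formula \eqref{derest} expresses each derivative of $g_\eps$ as a polynomial in moderate quantities divided by a determinant bounded below, hence moderate. The final point—that $[g_\eps(y_\eps)]\in U$ for every $[y_\eps]\in V$, i.e.~that $g$ lands in the strongly internal set $\sint{U_\eps}$—is handled by the same mean-value-theorem argument: if $(x_\eps)\sim_\rho(g_\eps(y_\eps))$ then setting $y'_\eps:=f_\eps(x_\eps)$ and using the moderateness of $f'$ gives $(y'_\eps)\sim_\rho(y_\eps)$, so $y'_\eps\in V_\eps$ and $x_\eps=g_\eps(y'_\eps)\in U_\eps$ for $\eps$ small.

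The main obstacle, and the only genuinely new ingredient compared to Theorem~\ref{thm:localIFTSharp}, is verifying that the resulting neighborhoods $U$ and $V$ are Fermat-open rather than merely sharply open. This is precisely what the two added hypotheses buy us: the finiteness $\no{Df(x_0)^{-1}}\le k\in\R_{>0}$ keeps $c$ finite so that the radius $r/c$ in \ref{enu:y_0} stays at the Fermat scale, and Fermat continuity of $Df$ lets us secure the estimate $\no{Df(x_0)-Df(x)}<_\R b$ on a real-radius ball. Everything else transfers literally, so I would write the proof as a short adaptation, emphasizing only these two places where the Fermat setting differs from the sharp one.
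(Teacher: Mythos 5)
Your proposal is correct and follows essentially the same route as the paper: rerun the sharp-topology proof with $b$ and $r$ now chosen in $\R_{>0}$ (using Fermat continuity of $Df$ and the finiteness of $\no{Df(x_{0})^{-1}}$) so that $c$ is finite and $B_{r/c}(y_{0})\subseteq V$ contains a Fermat ball $B_{s}^{{\scriptscriptstyle \text{F}}}(y_{0})$ of real radius $s<r/c$. The only minor imprecision is the claimed equality $\sint{U_{\eps}}=B_{r}^{{\scriptscriptstyle \text{F}}}(x_{0})$: that set is the sharp ball $B_{r}(x_{0})$, which merely \emph{contains} the Fermat ball $B_{r}^{{\scriptscriptstyle \text{F}}}(x_{0})$, but this containment is all that is needed.
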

\begin{proof}
We proceed as above, but now we have $r_{\eps}=r\in\R_{>0}$, $b_{\eps}=b\in\R_{>0}$
because of our assumptions. Setting $c_{\eps}:=\frac{a_{\eps}}{1-a_{\eps}b}$,
we have that $c:=[c_{\eps}]\in\rti_{>0}$ is finite. Therefore, there
exists $s\in\R_{>0}$ such that $s<\frac{r}{c}$. We can continue
as above, noting that now $B_{r}^{{\scriptscriptstyle \text{F}}}(x_{0})\subseteq U=B_{r}(x_{0})\subseteq X$
and $B_{s}^{{\scriptscriptstyle \text{F}}}(y_{0})\subseteq B_{r/c}(y_{0})\subseteq V$
are large neighborhoods of $x_{0}$ and $f(x_{0})$ respectively.
\hspace*{\fill}\qed 
\end{proof}

\begin{example}
\label{exa:examples}\ 
\begin{enumerate}
\item Thm.~\ref{thm:embeddingD'}, for $n=1$, shows that $\delta(x)=\left[b_{\eps}\psi_{\eps}\left(b_{\eps}x\right)\right]$
is, up to sheaf isomorphism, the Dirac delta. This also shows directly
that $\delta\in\gsf({\rti,\rti)}$. We can take the net $(\psi_{\eps})$
so that $\psi_{\eps}(0)=1$ for all $\eps$. In this way, $H'(0)=\delta(0)=b$
is an infinite number. We can thus apply the local inverse function
theorem \ref{thm:localIFTSharp} to the Heaviside function $H$ obtaining
that $H$ is a generalized diffeomorphism in an infinitesimal neighborhood
of $0$. This neighborhood cannot be finite because $H'(r)=0$ for
all $r\in\R_{\ne0}$. 
\item By the intermediate value theorem for GSF (see \cite[Cor.~42]{Gi-Ku-St15}),
in the interval $[0,1/2]$ the Dirac delta takes any value in $[0,\delta(0)]$.
So, let $k\in[0,1/2]$ such that $\delta(k)=1$. Then by the mean
value theorem for GSF (see \cite[Thm.~43]{Gi-Ku-St15}) $\delta(\delta(1))-\delta(\delta(k))=\delta(0)-\delta(1)=b-0=(\delta\circ\delta)'(c)\cdot(1-k)$
for some $c\in[k,1]$. Therefore $(\delta\circ\delta)'(c)=\frac{b}{1-k}\in\rti_{>0}$,
and around $c$ the composition $\delta\circ\delta$ is invertible.
Note that $(\delta\circ\delta)(r)=b$ for all $r\in\R_{\ne0}$, and
$(\delta\circ\delta)(h)=0$ for all $h\in\rti$ such that $\delta(h)$
is not infinitesimal. 
\end{enumerate}
Now, let $r\in\rti_{>0}$ be an infinitesimal generalized number,
i.e.~$r\approx0$. 
\begin{enumerate}[resume]
\item \label{enu:rx}Let $f(x):=r\cdot x$ for $x\in\csp{\R}$. Then $f'(x_{0})=r\approx0$
and Thm.~\ref{thm:localIFTSharp} yields $f^{-1}:y\in B_{s}(rx_{0})\mapsto y/r\in\csp{\R}$
for some $s\in\rti_{>0}$. But $y/r$ is finite only if $y$ is infinitesimal,
so that $s\approx0$. This shows that the assumption in Thm.~\ref{thm:localIFTFermat}
on $\no{Df(x_{0})^{-1}}$ being finite is necessary. 
\item Let $f(x):=\sin\frac{x}{r}$. We have $f\in\gsf(\rti,\rti)$ and $f'(x)=\frac{1}{r}\cos\frac{x}{r}$,
which is always an infinite number e.g.~if $\exists\lim_{\eps\to0^{+}}x_{\eps}\ne r(2k+1)\frac{\pi}{2}\approx0$,
$k\in\Z$. By Thm.~\ref{thm:localIFTSharp}, we know that $f$ is
invertible e.g.~around $x=0$. It is easy to recognize that $f$
is injective in the infinitesimal interval $\left(-\frac{\pi}{2}r,+\frac{\pi}{2}r\right)$.
In \cite[Exa.~3.9]{EDiss}, it is proved that $f$ is not injective
in any large neighborhood of $x=0$. Therefore, $\left(f|_{\left(-\frac{\pi}{2}r,+\frac{\pi}{2}r\right)}\right)^{-1}$
is a GSF that cannot be extended to a Colombeau generalized function. 
\item Similarly, $f(x):=r\sin x$, $x\in\rti$, has an inverse function
which cannot be extended outside the infinitesimal neighborhood $(-r,r)$. 
\item Thm.~\ref{thm:localIFTSharp} cannot be applied to $f(x):=x^{3}$
at $x_{0}=0$. However, if we restrict to $x\in(-\infty,-r)\cup(r,+\infty)$,
then the inverse function $f^{-1}(y)=y^{1/3}$ is defined in $y\in(-\infty,-r^{3})\cup(r^{3},+\infty)$
and has infinite derivative at each infinitesimal point in its domain. 
\end{enumerate}
\end{example}
In \cite{AFJ05}, Aragona, Fernandez and Juriaans introduced a differential
calculus on spaces of Colombeau generalized points based on a specific
form of convergence of difference quotients. Moreover, in \cite{AFJO12},
an inverse function theorem for Colombeau generalized functions in
this calculus was established. In the one-dimensional case it was
shown in \cite{GKV} that any GSF is differentiable in the sense of
\cite{AFJ05,AFJO12}, with the same derivative. Below we will show
that this compatibility is in fact true in arbitrary dimensions and
that Theorem \ref{thm:localIFTSharp} implies the corresponding result
from \cite{AFJO12}. In the remaining part of the present section,
we therefore restrict our attention to the case $\rho_{\eps}=\eps$, the gauge that
is used in standard Colombeau theory (as well as in \cite{AFJ05,AFJO12}),
and hence $\rti=\Rtil$ and $\csp{\Omega}=\otilc$.

First, we recall the definition from \cite{AFJ05}: 
\begin{definition}
A map $f$ from some sharply open subset $U$ of $\Rtil^{n}$ to $\Rtil^{m}$
is called differentiable in $x_{0}\in U$ in the sense of \cite{AFJ05}
with derivative $A\in L(\Rtil^{n},\Rtil^{m})$ if 
\begin{equation}
\lim_{x\to x_{0}}\frac{\no{f(x)-f(x_{0})-A(x-x_{0})}_{e}}{\no{x-x_{0}}_{e}}=0,\label{afjdiff}
\end{equation}
where 
\begin{align*}
 & v:(x_{\eps})\in\R_{(\eps)}^{n}\mapsto\sup\{b\in\R\mid\no{x_{\eps}}=O(\eps^{b})\}\in(-\infty,\infty]\\
 & \no{-}_{e}:x\in\Rtil^{n}\mapsto\exp(-v(x))\in[0,\infty).
\end{align*}
\end{definition}
The following result shows compatibility of this notion with the derivative
in the sense of GSF. 
\begin{lemma}
\label{afjcomp} Let $U$ be sharply open in $\Rtil^{n}$, let $x_{0}\in U$
and suppose that $f\in\gsfo(U,\Rtil^{m})$. Then $f$ is differentiable
in the sense of \cite{AFJ05} in $x_{0}$ with derivative $Df(x_{0})$. 
\end{lemma}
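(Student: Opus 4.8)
The plan is to reduce the defining limit \eqref{afjdiff} to a second order remainder estimate for $f$ at $x_0$, and then to translate that generalized-number estimate into a statement about the real-valued difference quotient by exploiting the elementary properties of the order valuation $v$ and of $\no{-}_e$. Throughout I would set $R(x):=f(x)-f(x_0)-Df(x_0)(x-x_0)\in\Rtil^m$, so that the goal becomes showing $\no{R(x)}_e=o(\no{x-x_0}_e)$ as $x\to x_0$ in the sharp topology, with $A=Df(x_0)$.

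First I would produce the remainder estimate, which is the only piece of differential calculus needed. By Thm.~\ref{thm:propGSF}.\ref{enu:locLipSharp} every derivative of $f$, and in particular $Df$, is locally Lipschitz in the sharp topology; hence there are a sharp ball $B_\delta(x_0)\sse U$ and a constant $L\in\Rtil_{\ge0}$ with $\no{Df(x)-Df(x_0)}\le L\no{x-x_0}$ for all $x\in B_\delta(x_0)$. Applying the mean value inequality for GSF (see \cite{Gi-Ku-St15}) to $g(x):=f(x)-Df(x_0)(x)$, whose differential is $Dg(x)=Df(x)-Df(x_0)$, along the segment $[x_0,x]\sse B_\delta(x_0)$, yields
\[
\no{R(x)}=\no{g(x)-g(x_0)}\le\sup_{c\in[x_0,x]}\no{Df(c)-Df(x_0)}\cdot\no{x-x_0}\le L\no{x-x_0}^2 .
\]

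Next I would record the properties of $\no{-}_e$ needed to convert this into \eqref{afjdiff}. Since $v$ is representative-independent and depends only on the generalized magnitude, $\no{-}_e$ is well defined on $\Rtil^n$ and $\no{z}_e=\no{\,\no{z}\,}_e$; from the definition of $v$ one checks that it is order-monotone ($\no{z}\le\no{w}\Rightarrow v(z)\ge v(w)$) and superadditive on products ($v(zw)\ge v(z)+v(w)$), that $\no{z}_e>0\iff z\ne0$, that $\no{z}_e<\infty$ for every moderate $z$, and that the metric $(x,y)\mapsto\no{x-y}_e$ induces precisely the sharp topology. Combining the valuation inequalities with the estimate above gives, for $x\in B_\delta(x_0)$,
\[
v(R(x))\ge v(L)+2\,v(x-x_0),\qquad\text{hence}\qquad \no{R(x)}_e\le\no{L}_e\cdot\no{x-x_0}_e^2 .
\]
Thus for $x\ne x_0$ in $B_\delta(x_0)$ we have $\dfrac{\no{R(x)}_e}{\no{x-x_0}_e}\le\no{L}_e\cdot\no{x-x_0}_e$, and since $\no{L}_e<\infty$ is a fixed real number while $\no{x-x_0}_e\to0$ as $x\to x_0$ sharply, the quotient tends to $0$, which is exactly \eqref{afjdiff}.

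The genuinely new work — and the point I expect to need the most care — is the dictionary between the two ``norms'': the estimate is naturally produced with the $\Rtil$-valued magnitude $\no{-}$, whereas \eqref{afjdiff} is phrased through the real-valued $\no{-}_e$. The mild obstacle is to verify at the level of representatives that the order valuation $v$ turns $\no{R(x)}\le L\no{x-x_0}^2$ into the displayed valuation inequality, and to confirm that $d_e$-convergence coincides with sharp convergence so that $\no{x-x_0}_e\to0$. All the analytic input (existence and value of $Df(x_0)$, local Lipschitz continuity of $Df$, and the mean value inequality) is already furnished by the GSF theory, so no further estimates are required; should the vector-valued mean value inequality not be available in exactly this form, the same bound can be obtained by applying the classical Taylor estimate to a defining net $f_\eps$ and invoking Thm.~\ref{thm:propGSF}.\ref{enu:modOnEpsDepBall}.
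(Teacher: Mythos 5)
Your proof is correct, and it arrives at the same decisive estimate as the paper --- a quadratic bound $\no{f(x)-f(x_0)-Df(x_0)(x-x_0)}_e\le C\,\no{x-x_0}_e^{2}$ with a fixed real constant $C$ --- but by a different route. The paper works at the level of a defining net: it reduces to $m=1$, applies the classical Taylor formula with integral remainder to $f_\eps$, and controls the second derivatives on the $\eps$-dependent balls $\Eball_{\eps^{q}}(x_\eps)$ via Thm.~\ref{thm:propGSF}.\ref{enu:modOnEpsDepBall}; this is exactly the fallback you sketch in your last sentence. You instead stay at the GSF level, invoking the sharp local Lipschitz continuity of $Df$ (Thm.~\ref{thm:propGSF}.\ref{enu:locLipSharp}) together with a mean value inequality, and then pay for this intrinsic formulation with an explicit dictionary between the $\Rtil$-valued norm $\no{-}$ and the real-valued $\no{-}_e$ (order-reversal and superadditivity of $v$, finiteness of $\no{L}_e$ for moderate $L$, and the fact that sharp convergence forces $\no{x-x_0}_e\to 0$). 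That dictionary is sound --- in particular $0\le\no{z}\le\no{w}$ does give $v(z)\ge v(w)$, and moderateness of the Lipschitz constant $L$ gives $\no{L}_e<\infty$ even though $L$ itself may be an infinite generalized number --- and the paper performs the same valuation arithmetic implicitly in its final step. The one point you should pin down is the precise form of the vector-valued mean value inequality you import from \cite{Gi-Ku-St15}: the equality form is one-dimensional, but the inequality $\no{g(x)-g(x_0)}\le\no{Dg(c)}\,\no{x-x_0}$ for some $c$ in the segment suffices and sidesteps any question about existence of the supremum over the segment in $\Rtil$. What your version buys is independence from the choice of defining net and no reduction to scalar components; what the paper's version buys is that the estimate drops out of a single application of Taylor's theorem without auxiliary lemmas about the valuation $v$.
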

\begin{proof}
Without loss of generality we may suppose that $m=1$. Let $f$ be
defined by the net $f_{\eps}\in\cinfty(\R^{n},\R)$ for all $\eps$.
Since $(D^{2}f_{\eps}(x_{\eps}))$ is moderate, it follows from Thm.~\ref{thm:propGSF}.\ref{enu:modOnEpsDepBall}
that there exists some $q>0$ such that $\sup_{y\in B_{\eps^{q}}^{{\scriptscriptstyle \text{E}}}(x_{\eps})}\no{D^{2}f_{\eps}(y)}\le\eps^{-q}$
for $\eps$ small. Then by Taylor's theorem we have 
\begin{align*}
f_{\eps}(x_{\eps})- & f_{\eps}(x_{0\eps})-Df_{\eps}(x_{0\eps})(x_{\eps}-x_{0\eps})=\\
 & =\sum_{|\alpha|=2}\frac{|\alpha|}{\alpha!}\int_{0}^{1}(1-t)^{|\alpha|-1}\partial^{\alpha}f_{\eps}(x_{0\eps}+t(x_{\eps}-x_{0\eps}))\,\diff{t}\cdot(x_{\eps}-x_{0\eps})^{\alpha}.
\end{align*}
For $[x_{\eps}]\in B_{\text{d}\eps^{q}}(x_{0})$ this implies that
\[
\no{f(x)-f(x_{0})-Df(x_{0})(x-x_{0})}_{e}\le e^{q}\no{x-x_{0}}_{e}^{2},
\]
thereby establishing \eqref{afjdiff} with $A=Df(x_{0})$, as claimed.
\hspace*{\fill}\qed 
\end{proof}

It follows that any $f\in\gsfo(U,\Rtil^{m})$ is in fact even infinitely
often differentiable in the sense of \cite{AFJ05}.

Based on these observations we may now give an alternative proof for
\cite[Th. 3]{AFJO12}: 
\begin{theorem}
\label{locinvCol}Let $\Omega\subseteq\R^{n}$ be open, $f\in\gs(\Omega)^{n}$,
and $x_{0}\in\otilc$ such that $\det Df(x_{0})$ is invertible in
$\Rtil$. Then there are sharply open neighborhoods $U$ of $x_{0}$
and $V$ of $f(x_{0})$ such that $f:U\to V$ is a diffeomorphism
in the sense of \cite{AFJ05}. 
\end{theorem}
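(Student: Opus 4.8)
The plan is to reduce the statement for Colombeau generalized functions to the already-established sharp local inverse function theorem (Thm.~\ref{thm:localIFTSharp}) together with the compatibility lemma (Lemma~\ref{afjcomp}), so that essentially no new hard analysis is required. First I would observe that by Thm.~\ref{thm:inclusionCGF} the Colombeau generalized function $f\in\gs(\Omega)^{n}$ defines a generalized smooth map $f\in\gsfo(\csp{\Omega},\Rtil^{n})$, with $\csp{\Omega}=\otilc$ in the present gauge $\rho_{\eps}=\eps$. Since $\otilc$ is a sharply open subset of $\Rtil^{n}$ (indeed it is the interior of $[\Omega]$, being a union of strongly internal sets), the point $x_{0}\in\otilc$ lies in the sharp interior of the domain of $f$.

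Next I would translate the hypothesis on the differential into the hypothesis needed for Thm.~\ref{thm:localIFTSharp}. The assumption is that $\det Df(x_{0})$ is invertible in $\Rtil$; by Lemma~\ref{lem:invertmatrix}, this is exactly equivalent to $Df(x_{0})$ being invertible as an element of $L(\Rtil^{n},\Rtil^{n})$. Here one must check that the Colombeau-theoretic differential used in the statement of \cite[Th.~3]{AFJO12} agrees with the GSF differential $Df(x_{0})$; but this is precisely the content of Lemma~\ref{afjcomp}, which shows that every $f\in\gsfo(U,\Rtil^{m})$ is differentiable in the sense of \cite{AFJ05} with derivative equal to $Df(x_{0})$. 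Hence the two notions of ``$\det Df(x_{0})$ invertible'' coincide, and the hypotheses of Thm.~\ref{thm:localIFTSharp} are met.

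Applying Thm.~\ref{thm:localIFTSharp} then furnishes sharply open neighborhoods $U\sse\otilc$ of $x_{0}$ and $V$ of $f(x_{0})$ such that $f:U\to V$ is invertible with $f^{-1}\in\gsf(V,U)=\gsfo(V,U)$. It remains only to upgrade the conclusion ``$f$ is a generalized diffeomorphism'' to ``$f$ is a diffeomorphism in the sense of \cite{AFJ05}''. For this I would invoke Lemma~\ref{afjcomp} once more, now applied to both $f\in\gsfo(U,\Rtil^{n})$ and its GSF inverse $f^{-1}\in\gsfo(V,\Rtil^{n})$: each is differentiable in the sense of \cite{AFJ05}, with derivatives $Df$ and $Df^{-1}$ respectively, and the GSF chain rule guarantees that these differentials are mutually inverse. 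Thus $f:U\to V$ is a bijective map that, together with its inverse, is differentiable in the sense of \cite{AFJ05}, which is precisely the required notion of diffeomorphism.

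The step I expect to be the main (though modest) obstacle is the bookkeeping of the two differentiability notions: one must be careful that the derivative appearing in the \cite{AFJO12} formulation is genuinely the GSF differential and not merely some formally similar object, so that the invertibility hypothesis transfers cleanly. Once Lemma~\ref{afjcomp} is in place this is immediate, but it is the pivot on which the whole reduction rests. Everything else—sharp openness of $\otilc$, the equivalence of invertibility conditions via Lemma~\ref{lem:invertmatrix}, and the application of Thm.~\ref{thm:localIFTSharp}—is routine, so the proof is genuinely short and consists essentially in assembling the preceding results.
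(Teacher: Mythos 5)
Your proposal is correct and follows essentially the same route as the paper's own proof: view $f$ as a GSF via Thm.~\ref{thm:inclusionCGF}, use sharp openness of $\otilc$ and Lemma~\ref{lem:invertmatrix} to verify the hypotheses of Thm.~\ref{thm:localIFTSharp}, and then apply Lemma~\ref{afjcomp} to $f$ and $f^{-1}$ to conclude they are diffeomorphisms in the sense of \cite{AFJ05}. The extra care you take in matching the two differentiability notions is a reasonable elaboration of what the paper leaves implicit.
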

\begin{proof}
By Thm.~\ref{thm:inclusionCGF}, $f$ can be viewed as an element
of $\gsfo(\otilc,\Rtil^{n})$. Moreover, $\otilc$ is sharply open,
which together with Lemma \ref{lem:invertmatrix} shows that all the
assumptions of Thm.\ \ref{thm:localIFTSharp} are satisfied. We conclude
that $f$ possesses an inverse $f^{-1}$ in $\gsfo(V,U)$ for a suitable
sharp neighborhood $V$ of $f(x_{0})$. Finally, by Lemma \ref{afjcomp},
both $f$ and $f^{-1}$ are infinitely differentiable in the sense
of \cite{AFJ05}. \hspace*{\fill}\qed 
\end{proof}

\section{Global inverse function theorems\label{globsec}}

The aim of the present section is to obtain statements on the global
invertibility of generalized smooth functions. For classical smooth
functions, a number of criteria for global invertibility are known,
and we refer to \cite{DGZ,KP} for an overview.

The following auxilliary result will repeatedly be needed below: 
\begin{lemma}
\label{lem:fromPointwiseToCompact}Let $f\in\gsf(X,Y)$ be defined
by $(f_{\eps})$, where $X\subseteq\rti^{n}$ and $Y\subseteq\rti^{d}$.
Assume that $\emptyset\ne[A_{\eps}]\subseteq X$. Let $b:\R^{d}\ra\R$
be a set-theoretical map such that $\bar{b}:[y_{\eps}]\in Y\mapsto[b(y_{\eps})]\in\rti$
is well-defined (e.g., $b(x)=|x|$). If $f$ satisfies 
\begin{equation}
\forall x\in X:\ \bar{b}\left[f(x)\right]>0,\label{eq:greaterZeroPointwise}
\end{equation}
then 
\begin{enumerate}
\item $\exists q\in\R_{>0}\,\forall^{0}\eps\,\forall x\in A_{\eps}:\,b\left(f_{\eps}(x)\right)>\rho_{\eps}^{q}$. 
\item For all $K\Subset\R^{n}$, if $[K]\subseteq X$ then $\forall^{0}\eps\,\forall x\in K:\,b\left(f_{\eps}(x)\right)>0$. 
\end{enumerate}
\end{lemma}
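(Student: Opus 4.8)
The plan is to prove the two statements in order, using the characterization of membership in internal sets from Theorem~\ref{thm:strongMembershipAndDistanceComplement} and the local Lipschitz / moderateness properties of GSF from Theorem~\ref{thm:propGSF}.

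For part (i), I would argue by contradiction. Suppose the conclusion fails: then for every $q\in\R_{>0}$ it is \emph{not} the case that $b(f_\eps(x))>\rho_\eps^q$ for all small $\eps$ and all $x\in A_\eps$. Negating the quantifiers, this yields for each $q$ a sequence of indices $\eps\to 0^+$ and points $x_\eps\in A_\eps$ with $b(f_\eps(x_\eps))\le\rho_\eps^q$. The natural move is to diagonalize: choosing $q=m$ for $m\in\N$ and extracting, I would build a single net $(\hat x_\eps)$ with $\hat x_\eps\in A_\eps$ for $\eps$ small such that $b(f_\eps(\hat x_\eps))\le\rho_\eps^{q(\eps)}$ where $q(\eps)\to\infty$. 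Setting $\hat x:=[\hat x_\eps]$, membership $\hat x\in[A_\eps]\subseteq X$ follows since $\hat x_\eps\in A_\eps$ for $\eps$ small (Definition~\ref{def:internalStronglyInternal}). Then $\bar b[f(\hat x)]=[b(f_\eps(\hat x_\eps))]$, and the estimate $b(f_\eps(\hat x_\eps))\le\rho_\eps^{q(\eps)}$ with $q(\eps)\to\infty$ forces $\bar b[f(\hat x)]\le\rho^a=[\rho_\eps^a]$ to fail strict positivity for every $a\in\R_{>0}$; by Lemma~\ref{lem:mayer}, $\bar b[f(\hat x)]>0$ is then violated, contradicting \eqref{eq:greaterZeroPointwise}.

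For part (ii), let $K\Subset\R^n$ with $[K]\subseteq X$, and suppose the conclusion fails, so along some subsequence $\eps\to 0^+$ there are points $x_\eps\in K$ with $b(f_\eps(x_\eps))\le 0$. Since $K$ is compact, after passing to a further subsequence $x_\eps\to\bar x\in K$; filling in arbitrary values off the subsequence, I obtain a net whose class $\bar x=[x_\eps]$ lies in $[K]\subseteq X$ (the limit point being in $K$ guarantees moderateness and membership via Theorem~\ref{thm:strongMembershipAndDistanceComplement}\ref{enu:internalSetsDistance}). Then $\bar b[f(\bar x)]=[b(f_\eps(x_\eps))]\le 0$ along the subsequence, and choosing representatives accordingly gives $\bar b[f(\bar x)]\le 0$ in $\rti$, which contradicts \eqref{eq:greaterZeroPointwise} via the equivalence between $>0$ and strict positivity of every representative (Lemma~\ref{lem:mayer}).

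The main obstacle I anticipate is the bookkeeping in the diagonalization for part (i): one must carefully combine the countably many failures (one per $q=m$) into a single net, choosing a sequence of thresholds $\eps_m\downarrow 0$ so that on each interval the corresponding exponent $q$ applies, thereby producing the single offending point $\hat x$ whose image defeats \emph{every} polynomial lower bound $\rho_\eps^a$ simultaneously. The delicate point is ensuring that the resulting net $(\hat x_\eps)$ is genuinely moderate and that $\hat x_\eps\in A_\eps$ holds for all small $\eps$ (not merely along a subsequence), so that $\hat x$ is a legitimate element of $[A_\eps]\subseteq X$; this is what allows the hypothesis \eqref{eq:greaterZeroPointwise} to be applied to $\hat x$ and yield the contradiction through Lemma~\ref{lem:mayer}.
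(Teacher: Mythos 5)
Your proof of (i) is essentially the paper's: the same contradiction-plus-diagonalization argument, building a single net lying in $[A_{\eps}]$ by filling in off the bad subsequence with arbitrary points $a_{\eps}\in A_{\eps}$, and then contradicting \eqref{eq:greaterZeroPointwise} via Lemma~\ref{lem:mayer}. The only divergence is in (ii), where the paper simply applies (i) with $A_{\eps}=K$ and notes $\rho_{\eps}^{q}>0$; your separate compactness/convergent-subsequence argument is correct but unnecessary (and the intermediate claim that $\bar{b}[f(\bar{x})]\le 0$ in $\rti$ is neither justified by an inequality holding only along a subsequence nor needed, since Lemma~\ref{lem:mayer} already rules out a representative that is nonpositive infinitely often).
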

\begin{proof}
In fact, suppose to the contrary that there was a sequence $(\eps_{k})_{k}\downarrow0$
and a sequence $x_{k}\in A_{\eps_{k}}$ such that $b(f_{\eps_{k}}(x_{k}))\le\rho_{\eps_{k}}^{k}$.
Let $A_{\eps}\ne\emptyset$ for $\eps\le\eps_{0}$, and pick $a_{\eps}\in A_{\eps}$.
Set 
\begin{equation}
x_{\eps}:=\left\{ \begin{array}{rl}
x_{k} & \text{ for }\eps=\eps_{k}\\
a_{\eps} & \text{ otherwise. }
\end{array}\right.\label{xkdef-1}
\end{equation}
It follows that $x:=[x_{\eps}]\in[A_{\eps}]\subseteq X$, and hence
$\bar{b}\left[f(x)\right]>0$ by \eqref{eq:greaterZeroPointwise}.
Therefore, $b\left(f_{\eps_{k}}(x_{k})\right)>\rho_{\eps_{k}}^{p}$
for some $p\in\R_{>0}$ by Lemma \ref{lem:mayer}, and this yields
a contradiction. The second part follows by setting $A_{\eps}=K$
in the first one and by noting that $\rho_{\eps}>0$.\hspace*{\fill}\qed 
\end{proof}

After these preparations, we now turn to generalizing global inverse
function theorems from the smooth setting to GSF. We start with the
one-dimensional case. Here it is well-known that a smooth function
$f:\R\to\R$ is a diffeomorphism onto its image if and only if $|f'(x)|>0$
for all $x\in\R$. It is a diffeomorphism onto $\R$ if in addition
there exists some $r>0$ with $|f'(x)|>r$ for all $x\in\R$. Despite
the fact that $\csp{\R}$ is non-Archimedean, there is a close counterpart
of this result in GSF. 
\begin{theorem}
\label{thm:1D} Let $f\in\gsf(\csp{\R},\csp{\R})$ and suppose that
there exists some $r\in\R_{\ge0}$ such that $|f'(x)|>r$ for all
$x\in\csp{\R}$. Then 
\begin{enumerate}
\item \label{enu:1-dim-f_epsDiffeom}$f$ has a defining net $(\bar{f}_{\eps})$
consisting of diffeomorphisms $\bar{f}_{\eps}:\R\to\R$. 
\item \label{enu:1-dim-fGlobDiffeom}$f$ is a global generalized diffeomorphism
in $\gsf(\csp{\R},f(\csp{\R}))$. 
\item \label{enu:1-dim-fOnto}If $r>0$, then $f(\csp{\R})=\csp{\R}$, so
$f$ is a global generalized diffeomorphism in $\gsf(\csp{\R},\csp{\R})$. 
\end{enumerate}
\end{theorem}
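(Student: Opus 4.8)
The plan is to mimic the classical one-dimensional argument, but carefully transporting everything to the level of defining nets since $\csp{\R}$ is non-Archimedean and not order-complete. The hypothesis $|f'(x)|>r$ for all $x\in\csp{\R}$ should first be converted into a uniform statement about a defining net $(f_\eps)$. Applying Lemma~\ref{lem:fromPointwiseToCompact} with $b=|-|$ to the GSF $f'$, and using that $\csp{\R}$ contains internal sets $[A_\eps]$ with $A_\eps$ compact, I would extract that $|f'_\eps(x)|>\rho_\eps^q$ for $\eps$ small, uniformly in $x$ over such compact sets; the delicate point is that a single uniform estimate over all of $\R$ need not hold for the given net, so the net may first have to be modified. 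Once a lower bound on $|f'_\eps|$ is available (at least locally in $x$ but uniformly in $\eps$), $f'_\eps$ has constant sign for each small $\eps$ by continuity and connectedness of $\R$, so each $f_\eps$ is strictly monotone, hence an injective immersion $\R\to\R$.

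For part~\ref{enu:1-dim-f_epsDiffeom}, strict monotonicity alone does not give surjectivity onto $\R$, so I would construct a corrected defining net $(\bar f_\eps)$ of genuine diffeomorphisms. The natural device is to extend each $f_\eps$ to a strictly monotone surjection of $\R$ onto $\R$ by gluing on affine (or suitably smooth) pieces outside a large compact interval where the uniform derivative bound holds, chosen so that $\bar f_\eps=f_\eps$ on an $\eps$-dependent ball exhausting $\csp{\R}$ in the sense of Lemma~\ref{Lem:fromInclusionTo-epsSmallIncl}. I must check that $(\bar f_\eps)$ still defines the same GSF on $\csp{\R}$ (i.e.\ agrees with $f$ on every compactly supported point and has moderate derivatives of all orders) and that each $\bar f_\eps$ is a $\Coo$-diffeomorphism of $\R$; the inverse function theorem for the smooth $\bar f_\eps$ then gives $\bar f_\eps^{-1}\in\Coo(\R,\R)$.

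For part~\ref{enu:1-dim-fGlobDiffeom}, I would set $g:=[\bar f_\eps^{-1}(-)]$ and show $g\in\gsf(f(\csp{\R}),\csp{\R})$ is a two-sided inverse of $f$. Injectivity of $f$ as a set-theoretical map follows from strict monotonicity of the $\bar f_\eps$ together with Lemma~\ref{lem:mayer}: if $f(x)=f(y)$ then $|f(x)-f(y)|$ is not invertible, while the mean value theorem for GSF and $|f'|>r\ge0$ force $|x-y|$ to be non-invertible as well, hence $x=y$. The moderateness of all derivatives of $\bar f_\eps^{-1}$ is handled exactly as in the proof of Thm.~\ref{thm:localIFTSharp}, via the formula $(\bar f_\eps^{-1})'=1/\bar f_\eps'\circ \bar f_\eps^{-1}$ differentiated repeatedly, using the lower bound on $|\bar f_\eps'|$ where it is in force; one must also verify that $g$ maps $f(\csp{\R})$ back into $\csp{\R}$ by the same $\sim_\rho$-argument used at the end of the proof of Thm.~\ref{thm:localIFTSharp}. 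That $g\circ f=\mathrm{id}$ and $f\circ g=\mathrm{id}$ is then pointwise on representatives.

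Finally, for part~\ref{enu:1-dim-fOnto}, when $r>0$ the derivative bound is now Archimedean-uniform, so the $\bar f_\eps$ can be taken with $|\bar f'_\eps|>r$ on all of $\R$; then each $\bar f_\eps$ is a bijection of $\R$ onto $\R$ with $|\bar f_\eps(x)-\bar f_\eps(0)|\ge r|x|\to\infty$, giving surjectivity with a quantitative growth estimate. Given an arbitrary $[y_\eps]\in\csp{\R}$, the preimage $x_\eps:=\bar f_\eps^{-1}(y_\eps)$ satisfies $|x_\eps|\le r^{-1}|y_\eps-\bar f_\eps(0)|$, which is moderate and in fact compactly supported because $y$ is; hence $[x_\eps]\in\csp{\R}$ and $f([x_\eps])=[y_\eps]$, so $f(\csp{\R})=\csp{\R}$. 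The main obstacle I expect is the correction step in \ref{enu:1-dim-f_epsDiffeom}: upgrading the pointwise/compact lower bound on $|f'|$ to a net of global diffeomorphisms without destroying the property that the net still represents $f$ on $\csp{\R}$, and arranging the growth control needed for surjectivity in part~\ref{enu:1-dim-fOnto}.
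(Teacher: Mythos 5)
Your proposal follows essentially the same route as the paper: it converts the pointwise bound $|f'|>0$ into uniform lower bounds on $[-n,n]$ via Lemma~\ref{lem:fromPointwiseToCompact}, corrects the defining net outside expanding compact intervals to obtain global diffeomorphisms (the paper does this concretely by integrating $f_\eps'\phi_n+1-\phi_n$ for cut-offs $\phi_n$), inverts $\eps$-wise and gets moderateness from the local theorem, and in the case $r>0$ uses the mean value estimate $|g_\eps(y_\eps)|\le r^{-1}(|y_\eps|+C)$ to keep preimages compactly supported. One minor slip in your injectivity aside: non-invertibility of $|x-y|$ does not imply $x=y$ in $\rti$; the correct deduction is that $|f'(\xi)|\cdot|x-y|=0$ with $|f'(\xi)|$ invertible forces $|x-y|=0$ --- though this aside is redundant once the two-sided inverse $g$ is in hand.
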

\begin{proof}
Let $(f_{\eps})$ be a defining net for $f$ such that $f_{\eps}\in\cinfty(\R,\R)$
for each $\eps$ (cf.\ Thm.\ \ref{thm:propGSF} \ref{enu:globallyDefNet}).
Since $|f(x)|>0$ for every $x\in\csp{\R}$, Lemma \ref{lem:fromPointwiseToCompact}
implies that for each $n\in\N$ there exists some $\eps_{n}>0$ and
some $q_{n}>0$ such that for each $\eps\in(0,\eps_{n}]$ and each
$x\in[-n,n]$ we have $|f'_{\eps}(x)|>\rho_{\eps}^{q_{n}}$. Clearly
we may suppose that $\eps_{n}\downarrow0$, $q_{n+1}>q_{n}$ for all
$n$ and that $\rho_{\eps}^{q_{n}}<1$. Now for any $n\in\N_{>0}$
let $\phi_{n}:\R\to[0,1]$ be a smooth cut-off function with $\phi_{n}\equiv1$
on $[-(n-1),n-1]$ and $\supp\phi_{n}\sse[-n,n]$. Supposing that
$f_{\eps}'(x)>0$ on $[-n,n]$ (the case $f_{\eps}'(x)<0$ on $[-n,n]$
can be handled analogously), we set 
\begin{align*}
v_{n\eps}(x) & :=f_{\eps}'(x)\phi_{n}(x)+1-\phi_{n}(x)\quad(x\in\R)\\
\bar{f}_{\eps}(x) & :=f_{\eps}(0)+\int_{0}^{x}v_{n\eps}(t)\,dt\quad(x\in\R,\ \eps_{n+1}<\eps\le\eps_{n}),
\end{align*}
and $\bar{f}_{\eps}:=f_{\eps}$ for $\eps\in(\eps_{0},1]$. Then $\bar{f}_{\eps}\in\cinfty(\R,\R)$
for each $\eps$, and for each $x\in\R$ and each $\eps\in(\eps_{n+1},\eps_{n}]$,
we have $\bar{f}'_{\eps}(x)=f_{\eps}'(x)\phi_{n}(x)+1-\phi_{n}(x)>\rho_{\eps}^{q_{n}}$
if and only if $\phi_{n}(x)\cdot\left[1-f'_{\eps}(x)\right]<1-\rho_{\eps}^{q_{n}}$.
The latter inequality holds if $x\notin[-n,n]$ or if $f'_{\eps}(x)\ge1$.
Otherwise, $\phi_{n}(x)\le1<\frac{1-\rho_{\eps}^{q_{n}}}{1-f'_{\eps}(x)}$
because $1>f'_{\eps}(x)>\rho_{\eps}^{q_{n}}$. Any such $\bar{f}_{\eps}$
therefore is a diffeomorphism from $\R$ onto $\R$. Also, $\bar{f}_{\eps}(x)=f_{\eps}(x)$
for all $x\in[-n,n]$ as soon as $\eps\le\eps_{n+1}$. Hence also $(\bar{f}_{\eps})$
is a defining net for $f$. This proves \ref{enu:1-dim-f_epsDiffeom}.

For each $\eps\le\eps_{0}$, let $g_{\eps}$ be the global inverse
of $\bar{f}_{\eps}$. We claim that $g:=[g_{\eps}]$ is a GSF from
$f(\csp{\R})$ onto $\csp{\R}$ that is inverse to $f$. For this
it suffices to show that whenever $y=[(y_{\eps})]\in f(\csp{\R})$,
then for each $k\in\N$, the net $(g_{\eps}^{(k)}(y_{\eps}))$ is
$\rho$-moderate. To see this, it suffices to observe that for $y=f(x)$,
$f$ satisfies the assumptions of the local inverse function theorem
(Thm.\ \ref{thm:localIFTSharp}) at $x$, and so the proof of that result
shows that $g$ is a GSF when restricted to a suitable sharp neighborhood
of $y$. But this in particular entails the desired moderateness property
at $y$, establishing \ref{enu:1-dim-fGlobDiffeom}.

Finally, assume that $r>0$. The same reasoning as in the proof of
\ref{enu:1-dim-f_epsDiffeom} now produces a defining net $(\bar{f}_{\eps})$
with the property that $|\bar{f}_{\eps}'(x)|>r$ for all $\eps\le\eps_{0}$
and all $x\in\R$. Again, each $\bar{f}_{\eps}$ is a diffeomorphism
from $\R$ onto $\R$, and we denote its inverse by $g_{\eps}:\R\to\R$.
Due to \ref{enu:1-dim-fGlobDiffeom} it remains to show that $f:\csp{\R}\to\csp{\R}$
is onto.

To this end, note first that $|g_{\eps}'(y)|<1/r$ for all $\eps\le\eps_{0}$
and all $y\in\R^{n}$. Also, since $f\in\gsf(\csp{\R},\csp{\R})$,
there exists some real number $C>0$ such that $\no{f_{\eps}(0)}\le C$
for $\eps$ small. For such $\eps$ and any $[y_{\eps}]\in\csp{\R}$
we obtain by the mean value theorem 
\begin{equation}
\no{g_{\eps}(y_{\eps})}=\no{g_{\eps}(y_{\eps})-g_{\eps}(f_{\eps}(0))}\le\frac{1}{r}\no{y_{\eps}-f_{\eps}(0)}\le\frac{1}{r}(\no{y_{\eps}}+C),\label{eq:C}
\end{equation}
so that $g_{\eps}(y_{\eps})$ remains in a compact set for $\eps$
small. Based on this observation, the same argument as in \eqref{derest}
shows that, for any $y=[y_{\eps}]\in\csp{\R}$ and any $k\ge1$, $(g_{\eps}^{(k)}(y_{\eps}))$
is moderate, so $(g_{\eps})$ defines a GSF $\csp{\R}\ra\csp{\R}$.
Hence given $y\in\csp{\R}$ it suffices to set $x:=g(y)$ to obtain
$f(x)=y$. \hspace*{\fill}\qed 
\end{proof}

Turning now to the multi-dimensional case, we first consider Hadamard's
global inverse function theorem. For its formulation, recall that
a map between topological spaces is called proper if the inverse image
of any compact subset is again compact. As is easily verified, a continuous
map $\alpha:\R^{n}\to\R^{m}$ is proper if and only if 
\begin{equation}
\no{\alpha(x)}\to\infty\ \text{ as }\ \no{x}\to\infty.\label{properchar}
\end{equation}

\begin{theorem}
\label{thm:hadamard} (Hadamard) A smooth map $f:\R^{n}\to\R^{n}$
is a global diffeomorphism if and only if it is proper and its Jacobian
determinant never vanishes. 
\end{theorem}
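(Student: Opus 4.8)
The plan is to prove both implications separately, noting that the forward direction is elementary while the converse is where the real work lies, requiring a bridge from the local to the global via covering space theory. For the forward direction, if $f$ is a global diffeomorphism then differentiating $f^{-1}\circ f=\mathrm{id}$ with the chain rule shows that $Df(x)$ is invertible for every $x$, so $\det Df(x)\neq 0$; moreover $f$ is a homeomorphism, and for any compact $K$ the preimage $f^{-1}(K)$ is the image of $K$ under the continuous inverse map, hence compact, so $f$ is proper.

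For the converse, assume $f$ is proper with $\det Df(x)\neq 0$ for all $x$. First I would invoke the classical inverse function theorem: nonvanishing of the Jacobian makes $f$ a local diffeomorphism, hence a local homeomorphism and in particular an open map. Since a proper continuous map into a locally compact Hausdorff space is closed, $f$ is also a closed map; as $\R^n$ is connected, the image $f(\R^n)$ is nonempty, open and closed, whence $f$ is surjective. The heart of the matter is then to upgrade the proper local homeomorphism $f$ to a covering map.

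The main obstacle is establishing the even-covering property. Fix $y\in\R^n$. Its fiber $f^{-1}(y)$ is compact by properness and discrete because $f$ is a local homeomorphism, hence finite, say $f^{-1}(y)=\{x_1,\dots,x_k\}$. Choose pairwise disjoint open sets $U_i\ni x_i$ on which $f$ restricts to a homeomorphism onto an open image. The delicate step is to produce a neighborhood $W$ of $y$ with $f^{-1}(W)\subseteq\bigcup_i U_i$: if no such $W$ existed, one could pick $y_m\to y$ with preimages $z_m\notin\bigcup_i U_i$; properness confines the $z_m$ to a compact set, so a subsequence converges to some $z$ with $f(z)=y$, forcing $z\in f^{-1}(y)\subseteq\bigcup_i U_i$, while the closed complement of $\bigcup_i U_i$ forces $z\notin\bigcup_i U_i$, a contradiction. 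Setting $V:=W\cap\bigcap_i f(U_i)$ and $V_i:=U_i\cap f^{-1}(V)$, one checks that $f^{-1}(V)=\bigsqcup_i V_i$ with each $f|_{V_i}\colon V_i\to V$ a homeomorphism, so $V$ is evenly covered and $f$ is a covering map.

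Finally I would conclude using the topology of $\R^n$: the base $\R^n$ is simply connected and locally path-connected while the total space $\R^n$ is connected, so the covering is trivial with a single sheet and $f$ is a homeomorphism. A bijective local diffeomorphism is a global diffeomorphism, which completes the converse and the proof.
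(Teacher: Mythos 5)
Your proof is correct, but note that the paper does not prove this statement at all: Theorem \ref{thm:hadamard} is quoted as a classical result and the proof is delegated to the reference \cite{Gor} (Gordon, \emph{Amer.\ Math.\ Monthly} 1972). So there is no in-paper argument to compare against; what you have supplied is a complete, self-contained proof along the standard topological route. The forward implication is handled correctly (chain rule for invertibility of $Df$, continuity of $f^{-1}$ for properness). For the converse, the chain local diffeomorphism $\Rightarrow$ open map, properness $\Rightarrow$ closed map, connectedness $\Rightarrow$ surjectivity is sound, and the heart of your argument --- that a proper local homeomorphism is a covering map --- is carried out correctly: finiteness of fibers (compact plus discrete), the compactness/subsequence argument producing the saturating neighborhood $W$, and the shrinking to $V=W\cap\bigcap_i f(U_i)$ with $f^{-1}(V)=\bigsqcup_i V_i$ are all standard and complete. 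The final step, that a covering of the simply connected, locally path-connected space $\R^n$ by the connected space $\R^n$ is one-sheeted, hence a homeomorphism, and that a bijective local diffeomorphism is a diffeomorphism, closes the argument. Gordon's original proof is in the same spirit (it also reduces to showing the map is a covering, via a path-lifting argument in which properness guarantees that lifts extend over the whole parameter interval), so your route is essentially the canonical one; its value here is simply that it makes the paper's cited black box explicit.
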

For a proof of this result we refer to \cite{Gor}.

The following theorem provides an extension of Thm.~\ref{thm:hadamard}
to the setting of GSF. 
\begin{theorem}
\label{thm:hadgsf}Suppose that $f\in\gsf(\csp{\R^{n}},\csp{\R^{n}})$
possesses a defining net \emph{$f_{\eps}:\R^{n}\ra\R^{n}$} such that: 
\begin{enumerate}
\item \label{enu:diffInvertible}$\forall x\in\R^{n}\,\forall\eps\in I:$
$Df_{\eps}(x)$ is invertible in $L(\R^{n},\R^{n})$, and for each
$x\in\csp{\R^{n}}$, $Df(x)$ is invertible in $L(\rti^{n},\rti^{n})$. 
\item \label{enu:uniformEpsAlpha} There exists some $\eps'\in I$ such
that $\inf_{\eps\in(0,\eps']}\no{f_{\eps}(x)}\to+\infty$ as $|x|\to\infty$. 
\end{enumerate}
Then $f$ is a global generalized diffeomorphism in $\gsf(\csp{\R^{n}},\csp{\R^{n}})$. 
\end{theorem}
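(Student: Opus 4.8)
The plan is to transfer Hadamard's classical theorem (Thm.~\ref{thm:hadamard}) to each member $f_\eps$ of the given defining net and then to verify that the net of classical inverses $g_\eps:=f_\eps^{-1}$ assembles into a GSF that is two-sided inverse to $f$. First I would restrict to $\eps\le\eps'$. For such $\eps$, assumption~\ref{enu:uniformEpsAlpha} gives $\no{f_\eps(x)}\ge\inf_{\eps''\in(0,\eps']}\no{f_{\eps''}(x)}\to+\infty$ as $\no{x}\to\infty$, so by the characterization~\eqref{properchar} each $f_\eps$ is proper; combined with the nonvanishing of $\det Df_\eps$ from assumption~\ref{enu:diffInvertible}, Thm.~\ref{thm:hadamard} shows that every $f_\eps$ with $\eps\le\eps'$ is a global diffeomorphism of $\R^n$. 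I then set $g_\eps:=f_\eps^{-1}$ for $\eps\le\eps'$ and, say, $g_\eps:=\mathrm{id}_{\R^n}$ otherwise, so that $f_\eps\circ g_\eps=g_\eps\circ f_\eps=\mathrm{id}_{\R^n}$ for $\eps\le\eps'$.

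The next step is to show that $g:=[g_\eps(-)]$ maps $\csp{\R^n}$ into $\csp{\R^n}$. Given $y=[y_\eps]\in\csp{\R^n}$, there is a compact $K\Subset\R^n$ with $y_\eps\in K$ for $\eps$ small; setting $M:=\sup_{z\in K}\no{z}$ and using~\ref{enu:uniformEpsAlpha} to pick $R>0$ with $\no{x}>R\Rightarrow\inf_{\eps\le\eps'}\no{f_\eps(x)}>M$, the identity $\no{f_\eps(g_\eps(y_\eps))}=\no{y_\eps}\le M$ forces $\no{g_\eps(y_\eps)}\le R$ for $\eps$ small. Hence $g_\eps(y_\eps)$ lies in the fixed compact set $K':=\{x\in\R^n\mid\no{x}\le R\}$ for $\eps$ small, so $g(y)\in\csp{\R^n}$ and is in particular moderate. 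This is the place where the uniformity in $\eps$ built into assumption~\ref{enu:uniformEpsAlpha} is indispensable.

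It then remains to verify that $g$ is a GSF, i.e.~(by Def.~\ref{def:netDefMap}) that $(\partial^\beta g_\eps(y_\eps))$ is $\rho$-moderate for every $\beta\in\N^n$ and every $y=[y_\eps]\in\csp{\R^n}$. Here I would exploit that the point $g(y)=[g_\eps(y_\eps)]$ already lies in $\csp{\R^n}$: on the one hand, since $f\in\gsf(\csp{\R^n},\csp{\R^n})$, Def.~\ref{def:netDefMap} gives that $(\partial^\alpha f_\eps(g_\eps(y_\eps)))$ is moderate for each $\alpha$; on the other hand, by assumption~\ref{enu:diffInvertible} together with Lemma~\ref{lem:invertmatrix} and Lemma~\ref{lem:mayer}, $\det Df(g(y))$ is invertible, whence $\no{\det Df_\eps(g_\eps(y_\eps))}>\rho_\eps^q$ for some $q\in\R_{>0}$ and $\eps$ small (equivalently, the bound $\no{\det Df_\eps(x)}>\rho_\eps^q$ may be obtained uniformly for $x\in K'$ via Lemma~\ref{lem:fromPointwiseToCompact}, whose role in the section is exactly to upgrade such pointwise invertibility to compact sets). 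Feeding these two facts into the inverse-derivative formula~\eqref{derest} and its higher-order analogues --- in which $\partial^\beta g_\eps(y_\eps)$ is a polynomial in the $\partial^\alpha f_\eps(g_\eps(y_\eps))$ with $\no{\alpha}\le\no{\beta}$, divided by a power of $\det Df_\eps(g_\eps(y_\eps))$ --- shows that each $(\partial^\beta g_\eps(y_\eps))$ is moderate, exactly as in the proofs of Thm.~\ref{thm:localIFTSharp} and Thm.~\ref{thm:1D}. Thus $g\in\gsf(\csp{\R^n},\csp{\R^n})$, and since $f_\eps\circ g_\eps=g_\eps\circ f_\eps=\mathrm{id}_{\R^n}$ pointwise we obtain $f\circ g=g\circ f=\mathrm{id}$, so $f$ is a global generalized diffeomorphism. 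I expect the main obstacle to be the higher-order bookkeeping in this last step: one must check that the single lower bound on $\det Df_\eps$ at $g_\eps(y_\eps)$ and the moderateness of all $f_\eps$-derivatives there really do propagate through the induction to give moderateness of $\partial^\beta g_\eps(y_\eps)$ for every multi-index $\beta$, and not merely for first order.
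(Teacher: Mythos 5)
Your proposal is correct and follows essentially the same route as the paper's own proof: an $\eps$-wise application of Hadamard's theorem, the uniform properness from assumption \ref{enu:uniformEpsAlpha} to confine $g_\eps(y_\eps)$ to a fixed compact set, and the determinant lower bound via Lemmas \ref{lem:invertmatrix}, \ref{lem:mayer} and \ref{lem:fromPointwiseToCompact} fed into \eqref{derest} to get moderateness of all derivatives of the inverse. The higher-order bookkeeping you flag is handled in the paper exactly as you suggest, by referring back to the argument surrounding \eqref{derest} in Thm.~\ref{thm:localIFTSharp}.
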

\begin{proof}
By Thm.~\ref{thm:hadamard}, each ${f}_{\eps}$ is a global diffeomorphism
$\R^{n}\to\R^{n}$ for each $\eps\le\eps'$ and we denote by $g_{\eps}:\R^{n}\to\R^{n}$
the global inverse of $f_{\eps}$. In order to prove that the net
$(g_{\eps})_{\eps\le\eps'}$ defines a GSF, we first note that, by
\ref{enu:uniformEpsAlpha}, the net $(f_{\eps})_{\eps\le\eps'}$ is
`uniformly proper' in the following sense: Given any $M\in\R_{\ge0}$
there exists some $M'\in\R_{\ge0}$ such that when $\no{x}\ge M'$
then $\forall\eps\le\eps':\ \no{f_{\eps}(x)}\ge M$.

Hence, for any $K\comp\R^{n}$, picking $M>0$ with $K\sse B_{M}(0)$
it follows that $g_{\eps}(K)\sse\overline{B_{M'}(0)}=:K'\comp\R^{n}$
for all $\eps\le\eps'$. Thereby, the net $(g_{\eps})_{\eps\le\eps'}$
maps $\csp{\R^{n}}$ into itself, i.e. 
\begin{equation}
\forall[y_{\eps}]\in\csp{\R^{n}}:\ \left[g_{\eps}(y_{\eps})\right]\in\csp{\R^{n}}.\label{eq:0der}
\end{equation}
Moreover, for each $K\Subset\R^{n}$, assumption \ref{enu:diffInvertible},
Lemma \ref{lem:invertmatrix}, Lemma \ref{lem:mayer} and Lemma \ref{lem:fromPointwiseToCompact}
yield 
\begin{equation}
\exists q\in\R_{>0}\,\forall^{0}\eps\,\forall x\in K:\ |\det Df_{\eps}(x)|>\rho_{\eps}^{q}.\label{detest}
\end{equation}
From \eqref{eq:0der} and \eqref{detest} it follows as in \eqref{derest}
that, for any $y=[y_{\eps}]\in\csp{\R}$ and any $|\beta|\ge1$, $(\partial^{\beta}g_{\eps}(y_{\eps}))$
is moderate, so $g:=[y_{\eps}]\mapsto[g_{\eps}(y_{\eps})]\in\gsf(\csp{\R^{n}},\csp{\R^{n}})$.
Finally, that $g$ is the inverse of $f$ on $\csp{\R^{n}}$ follows
as in Thm.~\ref{thm:1D}.\hspace*{\fill}\qed 
\end{proof}

The next classical inversion theorem we want to adapt to the setting
of generalized smooth functions is the following one: 
\begin{theorem}
\label{thm:hadlev} (Hadamard-Lévy) Let $f:X\to Y$ be a local diffeomorphism
between Banach spaces. Then $f$ is a diffeomorphism if there exists
a continuous non-decreasing function $\beta:\R_{\ge0}\to\R_{>0}$
such that 
\[
\int_{0}^{\infty}\frac{1}{\beta(s)}\,\diff{s}=+\infty,\quad\no{Df(x)^{-1}}\le\beta(\no{x})\quad\forall x\in X.
\]
This holds, in particular, if there exist $a$, $b\in\R_{>0}$ with
$\no{Df(x)^{-1}}\le a+b\no{x}$ for all $x\in X$. 
\end{theorem}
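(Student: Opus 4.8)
The plan is to show that $f$ is a covering map and then to exploit that a Banach space is contractible, hence simply connected, while the total space $X$ is connected: a covering map with connected total space onto a simply connected, locally path-connected base is a homeomorphism, and a local diffeomorphism that is a homeomorphism is a global diffeomorphism. First I would dispose of the ``in particular'' clause: if $\no{Df(x)^{-1}}\le a+b\no{x}$, then $\beta(s):=a+bs$ is continuous, non-decreasing, strictly positive, and $\int_0^\infty\frac{1}{a+bs}\diff{s}=+\infty$, so it suffices to treat the hypothesis phrased in terms of $\beta$.

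The analytic heart is a \emph{global path-lifting} estimate. Fix $x_0\in X$, set $y_0:=f(x_0)$, and let $\gamma\colon[0,1]\ra Y$ be a $C^1$ path with $\gamma(0)=y_0$. Since $f$ is a local diffeomorphism, any lift $x(\cdot)$ with $f(x(t))=\gamma(t)$ must solve $\dot x(t)=Df(x(t))^{-1}\dot\gamma(t)$, $x(0)=x_0$, whose right-hand side is locally Lipschitz in $x$ (inversion of $Df$ is smooth where $Df$ is invertible), so a unique maximal solution exists. Writing $\rho(t):=\no{x(t)}$, the bound on $\no{Df^{-1}}$ gives
\[
|\dot\rho(t)|\le\no{\dot x(t)}\le\no{Df(x(t))^{-1}}\,\no{\dot\gamma(t)}\le\beta(\rho(t))\,\no{\dot\gamma(t)},
\]
and separating variables yields $\int_{\rho(0)}^{\rho(t)}\frac{1}{\beta(s)}\diff{s}\le\int_0^t\no{\dot\gamma(\tau)}\diff{\tau}\le L$, the length of $\gamma$. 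As $\int_0^\infty\frac{1}{\beta(s)}\diff{s}=+\infty$, the quantity $\rho(t)$ cannot escape to $+\infty$ in finite time, so the maximal solution stays bounded and therefore exists on all of $[0,1]$. Lifting the segment $\gamma(t)=y_0+t(y_1-y_0)$ already shows that $f$ is surjective.

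Next I would upgrade path-lifting to the covering property. As $f$ is a local homeomorphism enjoying the unique path-lifting property just established, it is a covering map over $Y$, the same short-segment lifts providing evenly covered neighborhoods uniformly in the base point. Since $Y$ is simply connected and locally path-connected and $X$ is connected, the covering has a single sheet, i.e.\ $f$ is a homeomorphism; being also a local diffeomorphism, $f$ has smooth inverse, so $f$ is a global diffeomorphism.

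The main obstacle I anticipate is exactly this last upgrade: passing rigorously from ODE-based lifting of $C^1$ paths to the full covering-map structure in the infinite-dimensional Banach setting---lifting merely continuous paths (by approximation or the classical continuation argument) and constructing the evenly covered neighborhoods uniformly. The a priori blow-up estimate is the clean, essential ingredient; the topological packaging into a covering map, though classical, is where the care is required.
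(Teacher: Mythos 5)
The paper itself contains no proof of this statement: it is quoted as a classical result and the proof is delegated to the cited reference \cite{DGZ}. So your proposal is not competing with an argument in the text; it should be measured against the standard literature proof, and indeed it follows that standard route (lift $C^1$ paths by solving $\dot x=Df(x)^{-1}\dot\gamma$, use the divergence of $\int_0^\infty\beta(s)^{-1}\diff{s}$ to exclude blow-up of the lift, conclude via covering-space topology). The analytic half is essentially right. Two routine points deserve a sentence each in a full write-up: $\rho(t)=\no{x(t)}$ is only Lipschitz, so the differential inequality $|\dot\rho|\le\no{\dot x}$ holds almost everywhere (which suffices to integrate it); and the continuation of the maximal solution past a finite escape time cannot invoke compactness in an infinite-dimensional $X$ --- instead, the a priori bound makes $\no{\dot x(t)}$ bounded, hence $x(t)$ Cauchy as $t$ approaches the escape time, and completeness of the Banach space $X$ plus local existence at the limit point does the job.

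The genuine gap is the one you name yourself: ``local homeomorphism with path lifting $\Rightarrow$ covering map'' is not a free implication, and the clause about short-segment lifts ``providing evenly covered neighborhoods uniformly in the base point'' is precisely where all the remaining work is hiding (one must show that the union of lifted radial segments through a fibre point is open and is mapped homeomorphically onto a ball, which requires continuity of the lift in its initial data and in the ray). A cleaner way to close the argument, staying entirely inside your ODE framework and avoiding evenly covered neighborhoods altogether, is the monodromy argument: given two $C^1$ paths $\gamma_0,\gamma_1$ from $y_0$ to $y$, lift each path $t\mapsto(1-s)\gamma_0(t)+s\gamma_1(t)$ of the affine homotopy; the a priori bound is uniform in $s$ because the lengths are, so all lifts live on $[0,1]$ in a fixed bounded set, and continuous dependence of solutions on the parameter $s$ makes $s\mapsto x_s(1)$ a continuous map into the fibre $f^{-1}(y)$, which is discrete since $f$ is a local homeomorphism; hence $x_s(1)$ is constant. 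This gives a well-defined $g:Y\ra X$ with $f\circ g=\mathrm{id}$; uniqueness of lifts (the agreement set of two lifts is open by the local homeomorphism property and closed by Hausdorffness) gives $g\circ f=\mathrm{id}$ after lifting $f\circ\sigma$ for a segment $\sigma$ joining $x_0$ to an arbitrary $x$; smoothness of $g$ is then a local statement already covered by the local inverse function theorem. With that replacement your proof is complete and is, in substance, the proof in \cite{DGZ}.
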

For a proof, see \cite{DGZ}.

We can employ this result to establish the following global inverse
function theorem for GSF. 
\begin{theorem}
\label{thm:hadlevgsf}Suppose that $f\in\gsf(\csp{\R^{n}},\csp{\R^{n}})$
satisfies: 
\begin{enumerate}
\item \label{enu:diffInvertibleHL}$f$ possesses a defining net \emph{$f_{\eps}:\R^{n}\ra\R^{n}$
such that }$\forall x\in\R^{n}\,\forall\eps\in I:$ $Df_{\eps}(x)$
is invertible in $L(\R^{n},\R^{n})$, and for each $x\in\csp{\R^{n}}$,
$Df(x)$ is invertible in $L(\rti^{n},\rti^{n})$. 
\item \label{enu:beta} There exists a net of continuous non-decreasing
functions $\beta_{\eps}:\R_{\ge0}\ra\R_{>0}$ such that $\forall^{0}\eps\,\forall x\in\R^{n}:\ \no{Df_{\eps}(x)^{-1}}\le\beta_{\eps}(\no{x})$
and 
\[
\int_{0}^{\infty}\frac{1}{\beta_{\eps}(s)}\,\diff{s}=+\infty.
\]
\end{enumerate}
Then $f$ is a global generalized diffeomorphism in $\gsf(\csp{\R^{n}},f(\csp{\R^{n}}))$.

If instead of \ref{enu:beta} we make the stronger assumption
\begin{enumerate}[resume]
\item \label{enu:ii'}$\exists C\in\R_{>0}:\ \forall^{0}\eps\,\forall x\in\R^{n}:\ \no{Df_{\eps}(x)^{-1}}\le C,$
\end{enumerate}
\noindent then $f$ is a global generalized diffeomorphism in $\gsf(\csp{\R^{n}},\csp{\R^{n}})$.

In particular, \ref{enu:beta} applies if there exist $a$, $b\in\rti_{>0}$
that are finite (i.e., $a_{\eps}$, $b_{\eps}<R$ for some $R\in\R$
and $\eps$ small) with $\no{Df_{\eps}(x)^{-1}}\le a_{\eps}+b_{\eps}\no{x}$
for $\eps$ small and all $x\in\csp{\R^{n}}$. 

\end{theorem}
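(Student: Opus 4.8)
The plan is to reproduce, for each of the three assertions, the three-step pattern already used in Thm.~\ref{thm:hadgsf} and Thm.~\ref{thm:1D}: first push every member $f_\eps$ of the defining net through the \emph{classical} inversion theorem to obtain global diffeomorphisms with inverses $g_\eps$; then verify that the net $(g_\eps)$ defines a GSF of the required type; and finally check that this GSF inverts $f$ on generalized points. The finiteness and uniformity hypotheses are tailored precisely so that the classical theorem applies uniformly in $\eps$ and so that moderateness of $(g_\eps)$ can be extracted.

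For the first assertion I would apply the classical Hadamard--Lévy theorem (Thm.~\ref{thm:hadlev}) separately for each small $\eps$: by \ref{enu:diffInvertibleHL} every $f_\eps$ is a local diffeomorphism of $\R^n$, while \ref{enu:beta} supplies exactly the bound $\no{Df_\eps(x)^{-1}}\le\beta_\eps(\no{x})$ with $\int_0^\infty\beta_\eps(s)^{-1}\diff{s}=+\infty$ that the theorem requires. Hence each such $f_\eps$ is a global diffeomorphism $\R^n\to\R^n$, with inverse $g_\eps$. The crux is to show that $(g_\eps)$ defines a GSF on $f(\csp{\R^n})$, and since here there is no global Lipschitz control on $g_\eps$ I would argue locally, as in Thm.~\ref{thm:1D}\,\ref{enu:1-dim-fGlobDiffeom}: given $y\in f(\csp{\R^n})$, write $y=f(x)$ with $x\in\csp{\R^n}$; by \ref{enu:diffInvertibleHL} the differential $Df(x)$ is invertible, so $f$ meets the hypotheses of the local inverse function theorem (Thm.~\ref{thm:localIFTSharp}) at $x$ (recall that $\csp{\R^n}$ is sharply open, so $x$ lies in its sharp interior). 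The proof of that theorem already yields moderateness of all derivatives of the local inverse net and that it maps into $\csp{\R^n}$; because each $f_\eps$ is \emph{globally} invertible, this local inverse net coincides with $(g_\eps)$ near $y$, so $(\partial^\beta g_\eps(y_\eps))$ is moderate for every $\beta$ and $[g_\eps(y_\eps)]\in\csp{\R^n}$. Thus $g:=[g_\eps(-)]\in\gsf(f(\csp{\R^n}),\csp{\R^n})$, and that $f$ and $g$ are mutually inverse on generalized points follows exactly as in Thm.~\ref{thm:1D}.

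For the second assertion I would first note that \ref{enu:ii'} implies \ref{enu:beta} with the constant choice $\beta_\eps\equiv C$, whose reciprocal integrates to $+\infty$; so the first part already produces a GSF inverse $g$ onto $f(\csp{\R^n})$, and it remains only to prove $f(\csp{\R^n})=\csp{\R^n}$. The uniform bound gives $\no{g_\eps'(y)}=\no{Df_\eps(g_\eps(y))^{-1}}\le C$ for all $y$, so each $g_\eps$ is $C$-Lipschitz. Then, exactly as in \eqref{eq:C}, the mean value theorem yields $\no{g_\eps(y_\eps)}\le C(\no{y_\eps}+\no{f_\eps(0)})$, which stays bounded for $y\in\csp{\R^n}$ (using $f(0)\in\csp{\R^n}$), whence $[g_\eps(y_\eps)]\in\csp{\R^n}$ for every such $y$; moderateness of the higher derivatives of $g_\eps$ then follows via the determinant estimate and the formula \eqref{derest}, as in Thm.~\ref{thm:hadgsf}. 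Consequently $g\in\gsf(\csp{\R^n},\csp{\R^n})$, and setting $x:=g(y)$ for arbitrary $y\in\csp{\R^n}$ gives $f(x)=y$, so $f$ is onto.

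Finally, for the ``in particular'' claim I would set $\beta_\eps(s):=a_\eps+b_\eps s$, choosing by Lemma~\ref{lem:mayer} (since $a,b\in\rti_{>0}$) representatives with $a_\eps,b_\eps>0$ for $\eps$ small. This $\beta_\eps$ is continuous, non-decreasing and positive, the affine estimate gives $\no{Df_\eps(x)^{-1}}\le\beta_\eps(\no{x})$, and $\int_0^\infty(a_\eps+b_\eps s)^{-1}\diff{s}=+\infty$ because $b_\eps>0$, so \ref{enu:beta} holds. I expect the main obstacle to lie in the moderateness step of the first assertion---transferring moderateness of the inverse from the local theorem to the globally defined net $(g_\eps)$ in the absence of a global Lipschitz bound---and, for the last claim, in ensuring the affine estimate is genuinely available as a net inequality for all $x\in\R^n$ (here the finiteness of $a,b$ is used), rather than merely on compactly supported points, so that Hadamard--Lévy can be invoked uniformly in $\eps$.
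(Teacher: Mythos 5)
Your proposal is correct and follows essentially the same route as the paper: an $\eps$-wise application of the classical Hadamard--L\'evy theorem to get global inverses $g_\eps$, transfer of moderateness from the local inverse function theorem exactly as in Thm.~\ref{thm:1D}~\ref{enu:1-dim-fGlobDiffeom}, and, under \ref{enu:ii'}, the uniform Lipschitz bound plus the mean value theorem to get \eqref{eq:0der} and conclude as in Thm.~\ref{thm:hadgsf}. The ``in particular'' step via $\beta_\eps(s):=a_\eps+b_\eps s$ also matches what the paper intends.
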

\begin{proof}
From \ref{enu:beta} it follows by an $\eps$-wise application of
Thm.~\ref{thm:hadlev} that there exists some $\eps_{0}>0$ such
that each $f_{\eps}$ with $\eps<\eps_{0}$ is a diffeomorphism: $\R^{n}\to\R^{n}$.
We denote by $g_{\eps}$ its inverse. Using assumption \ref{enu:diffInvertibleHL},
it follows exactly as in the proof of Thm.~\ref{thm:1D} \ref{enu:1-dim-fGlobDiffeom}
that $g:=[g_{\eps}]$ is an element of $\gsf(f(\csp{\R^{n}}),\csp{\R^{n}})$
that is inverse to $f$.

Assuming \ref{enu:ii'}, for any $[y_{\eps}]\in\csp{\R^{n}}$ and
$\eps$ small, we have 
\[
\no{Dg_{\eps}(y_{\eps})}=\no{(Df_{\eps}(g_{\eps}(y_{\eps})))^{-1}}\le C,
\]
so the mean value theorem yields 
\begin{equation}
\no{g_{\eps}(y_{\eps})}=\no{g_{\eps}(y_{\eps})-g_{\eps}(f_{\eps}(0))}\le C\no{y_{\eps}-f_{\eps}(0)},\label{eq:Cf0}
\end{equation}
which is uniformly bounded for $\eps$ small since $f\in\gsf(\csp{\R^{n}},\csp{\R^{n}})$.
We conclude that $(g_{\eps})$ satisfies \eqref{eq:0der}. From here,
the proof can be concluded literally as in Thm.~\ref{thm:hadgsf}.
\hspace*{\fill}\qed
\end{proof}

\begin{remark}
By Thm.~\ref{thm:inclusionCGF}, for $\rho=(\eps)$, the space $\gsf(\csp{\R^{n}},\rti^{n})$
can be identified with the special Colombeau algebra $\gs(\R^{n})^{n}$.
In this picture, $\gsf(\csp{\R^{n}},\csp{\R^{n}})$ corresponds to
the space of c-bounded Colombeau generalized functions on $\R^{n}$
(cf.~\cite{Kun02,GKOS}). Therefore, under the further assumption
that $f(\csp{\R^{n}})=\csp{\R^{n}}$, theorems \ref{thm:hadgsf} and
\ref{thm:hadlevgsf} can alternatively be viewed as global inverse
function theorems for c-bounded Colombeau generalized functions. 
\end{remark}

\section{Conclusions}

Once again, we want to underscore that the statement of the local
inverse function theorem \ref{thm:localIFTSharp} is the natural generalization
to GSF of the classical result. Its simplicity relies on the fact
that the sharp topology is the natural one for GSF, as explained above.
This natural setting permits to include examples in our theory that
cannot be incorporated in an approach based purely on Colombeau generalized
functions on classical domains (cf.~Example \ref{exa:examples} and
\cite{EDiss}).

Moreover, as Thm.~\ref{thm:localIFTFermat} shows, the concept of
Fermat topology leads, with comparable simplicity, to sufficient conditions
that guarantee solutions defined on large (non-infinitesimal) neighborhoods.

\vspace{1em}
 \textbf{Acknowledgement:} We are indebted to the referee for several
helpful comments that have substantially improved the results of Section
\ref{globsec}. P.~Giordano has been supported by grants P25116 and
P25311 of the Austrian Science Fund FWF. M.~Kunzinger has been supported
by grants P23714 and P25326 of the Austrian Science Fund FWF.

\end{document}